\newtheorem{theorem}{Theorem}[section]
\newtheorem{lemma}[theorem]{Lemma}
\newtheorem{example}[theorem]{Example}
\begin{document}

\begin{frontmatter}



\title{A semi-discrete numerical scheme for nonlocally regularized KdV-type  equations}


\author[label1]{H. A. Erbay\corref{cor1}}
    \ead{husnuata.erbay@ozyegin.edu.tr}
    \address[label1]{Department of Natural and Mathematical Sciences, Faculty of Engineering, Ozyegin University,  Cekmekoy 34794, Istanbul, Turkey}
    \cortext[cor1]{Corresponding author}

\author[label1]{S. Erbay}
    \ead{saadet.erbay@ozyegin.edu.tr}

\author[label2]{A. Erkip}
    \ead{albert@sabanciuniv.edu}
    \address[label2]{Faculty of Engineering and Natural Sciences, Sabanci University, Tuzla 34956, Istanbul,  Turkey}

\begin{abstract}
A general class of KdV-type  wave equations regularized with a convolution-type nonlocality in space is considered.    The class  differs from the class of the nonlinear nonlocal unidirectional wave equations previously studied  by the addition of a  linear convolution term involving third-order derivative. To solve the Cauchy problem we propose a  semi-discrete numerical method based on a uniform spatial discretization, that is  an extension of a previously published work of the present authors.  We  prove uniform convergence of the numerical method  as the mesh size goes to zero. We also prove that the localization error  resulting from localization to a finite domain is significantly less than  a given threshold if the finite domain is large enough. To illustrate the theoretical results,  some numerical experiments  are carried out for the Rosenau-KdV equation, the Rosenau-BBM-KdV equation  and a convolution-type integro-differential equation. The experiments conducted for three particular choices of the kernel function confirm the error estimates that we provide.
\end{abstract}

\begin{keyword}
    Nonlocal nonlinear wave equation \sep Discretization \sep   Semi-discrete scheme \sep KdV equation \sep Rosenau equation \sep Error estimates

    \MSC[2020] 35Q53 \sep   65M12 \sep 65M20 \sep 65Z05
\end{keyword}

\end{frontmatter}


\setcounter{equation}{0}
\section{Introduction}\label{sec:sec1}

In this paper, we will propose a semi-discrete numerical method for nonlocally regularized Korteweg-de Vries-type equations of the form
\begin{equation}
     u_{t} +\alpha \ast \big((f(u))_{x}+\kappa u_{xxx}\big)=0, \label{eq:cont}
\end{equation}%
where  $\kappa$ is  a positive constant and $\alpha$  is a sufficiently smooth kernel of the convolution operator
 \begin{displaymath}
    (\alpha \ast v)(x)= \int_{\mathbb{R}} \alpha(x-y)v(y)\mbox{d}y.
\end{displaymath}
Obviously,   (\ref{eq:cont}) reduces to the  Korteweg-de Vries (KdV) equation \cite{Korteweg1895} when $\alpha$ is the Dirac measure and $f(u)=u^{2}/2$. The basic idea of the numerical method is to consider space discretization of (\ref{eq:cont}) on a uniform grid and  to discretize the convolution integral. We  prove uniform convergence of the numerical method and  show that the accuracy  depends upon the smoothness properties of the kernel function. We provide numerical experiments  that illustrate  our error estimates and also make comparisons with the exact solutions available only in very special cases of $\alpha$.

Although various aspects of nonlinear dispersive wave models have been studied extensively in the literature, there are fewer studies on nonlocal  models of wave propagation  appearing in many  applications.   The class (\ref{eq:cont}) considered in this article has not been previously addressed,  but some members of this class are well-known equations in the literature. For instance, if the kernel function $\alpha$ is chosen as the  Green's function
\begin{equation}
    \alpha(x)= {1\over {2\sqrt 2}}e^{-{\vert x\vert\over \sqrt 2}}
                        \Big( \cos\big({{\vert x\vert}\over {\sqrt 2}}\big) + \sin \big({{\vert x\vert}\over {\sqrt 2}}\big) \Big ) \label{eq:ros-ker}
\end{equation}
of the differential operator $1+D^{4}_{x}$  (where $D_{x}$ represents the partial derivative with respect to $x$),   (\ref{eq:cont}) reduces to a generalized form of the  Rosenau-Korteweg de Vries (Rosenau-KdV) equation  \cite{Wang2019} (see also \cite{Rosenau1988})
\begin{equation}
    u_{t} +u_{x}+u_{xxxxt}+\kappa u_{xxx}+(g(u))_{x}=0 \label{eq:rosenau}
\end{equation}
for $f(u)=u+g(u)$. On the other hand, if $\alpha$ is chosen as the  Green's function
\begin{equation}
    \alpha(x)= {1\over {2\sqrt 3}}e^{-{\sqrt 3\over  2}\vert x\vert}\Big( \cos\big({{\vert x\vert}\over  2}\big) +\sqrt 3 \sin \big({{\vert x\vert}\over  2}\big) \Big ) \label{eq:ros-bbm-ker}
\end{equation}
of the differential operator $1-D^{2}_{x}+D^{4}_{x}$, then we get  from  (\ref{eq:cont}) the Rosenau-BBM (Benjamin-Bona-Mahony)-KdV equation \cite{ Wongsaijai2014}
 \begin{equation}
    u_{t} +u_{x}-u_{xxt}+u_{xxxxt}+\kappa u_{xxx}+(g(u))_{x}=0.  \label{eq:rosenau-bbm}
\end{equation}
Finally, if we let $\alpha(x)={1\over 2}e^{-|x|}$ which is the  Green's function of the differential operator $1-D^{2}_{x}$, (\ref{eq:cont}) reduces to  a generalized form of the BBM-KdV equation,
\begin{equation}
    u_{t}+u_{x}-u_{xtt}+\kappa u_{xxx}+(g(u))_{x}=0.  \label{eq:bbm}
\end{equation}
Keeping these familiar examples in mind, we propose a numerical method for (\ref{eq:cont}) by imposing some assumptions on $\alpha$, that are compatible with (\ref{eq:ros-ker}) and (\ref{eq:ros-bbm-ker}). It still remains, however, to extend the present analysis to the kernels that are compatible with the exponential kernel.

We note that (\ref{eq:cont})  does not reduce to a partial differential equation unless $\alpha$ is  the Green's function of a differential operator. The most important property of the present  semi-discrete scheme is that it can be successfully used to solve (\ref{eq:cont})  with  an arbitrary kernel function  regardless of whether it is a Green's function. The reason for this improvement is that the scheme  is based on truncated discrete convolution sums rather than finite-difference approximations.

Taking inspiration from the numerical scheme  used in \cite{Bona1981} to solve the   BBM equation  \cite{Benjamin1972}, in previous two works the present authors have proposed the semi-discrete numerical methods based on both  a uniform space discretization and the discrete convolution operator  for the nonlocal nonlinear bidirectional wave equation  \cite{Erbay2018} and for the  nonlocal nonlinear unidirectional wave equation  \cite{Erbay2021}. The present work is an extension of   \cite{Erbay2021} (in which $\kappa=0$) to   (\ref{eq:cont}) involving the extra "KdV term" $u_{xxx}$.   So we shall not give some proofs in full detail, but instead refer the reader to \cite{Erbay2018, Erbay2021} for more details.    As in  \cite{Erbay2021} our strategy  in obtaining  the  discrete problem is to transfer the spatial derivative in (\ref{eq:cont}) to the kernel function.  As it was already observed in \cite{Bona1981}, an advantage of this direct approach is that  a further time discretization will not involve any stability issues regarding spatial mesh size.  Note that for a sufficiently smooth kernel, the derivatives can be transferred to $\alpha $ and the equation (\ref{eq:cont}) becomes
\begin{equation*}
        u_{t}+\alpha ^{\prime }\ast f(u)+\kappa \alpha ^{\prime \prime \prime }\ast u=0.
\end{equation*}%
Since  the spatial discrete derivatives of $u$ do not appear in the resulting discrete problem, even though this extension involves the extra "KdV term" $u_{xxx}$, the discretizations and proofs are more straightforward due to the extra smoothness assumption on the kernel $\alpha $. Our approach will also apply to nonlocal equations of the form
\begin{equation*}
     u_{t} +\alpha_{1} \ast (f(u))_{x}+\kappa \alpha_{2}\ast u_{xxx}=0,
\end{equation*}%
as well as generalizations involving higher-order derivatives under appropriate smoothness conditions  on the kernels.

The paper is structured as follows. In Section \ref{sec:sec2}  we discuss some of the basics of setting up, such as,   local well-posedness of the continuous Cauchy problem and discretization in space.  In Section \ref{sec:sec3},  the convergence of the discretization error with respect to mesh size for the semi-discrete problem    is proved. In Section \ref{sec:sec4}  we discuss the localization error for  the truncated problem and give a decay estimate. Section \ref{sec:sec5} is devoted to  numerical experiments.

The notation used in the present paper is as follows.  $\Vert u\Vert_{L^p}$ is the $L^p$ ($1\leq p \leq \infty$) norm of $u$ on $\mathbb{R}$, $W^{k,p}(\mathbb{R})=\{ u\in L^p(\mathbb{R}): D^ju \in L^p(\mathbb{R}),~~ j \leq k \} $ is the $L^{p}$-based Sobolev space with the norm $\Vert u\Vert_{W^{k,p}}=\sum_{j\leq k} \|D^j u\|_{L^p},~1\leq p \leq \infty$ and  $H^{s}$ is the usual $L^{2}$-based  Sobolev space of index $s$ on $\mathbb{R}$.  $C$ denotes a generic positive constant. For a real number $s$, the symbol $[s]$ denotes the largest integer less than or equal to $s$.

\setcounter{equation}{0}
\section{Preliminaries}\label{sec:sec2}

In this section we  first  give some preliminary results about  local well-posedness of the Cauchy problem for  (\ref{eq:cont1}) and  error estimates of discretizations of integrals on an infinite uniform grid.

\subsection{The Continuous  Cauchy Problem}
We  consider the Cauchy problem
\begin{align}
  &   u_{t}+\alpha ^{\prime }\ast f(u)+\kappa\alpha ^{\prime \prime \prime }\ast  u=0, \text{ \ \ \ }  x\in \mathbb{R}\text{, \ \ }t>0, \label{eq:cont1}  \\
  &  u(x,0)=\varphi(x), \text{ \ \ \ }  x\in \mathbb{R}.  \label{eq:initial}
  \end{align}%
We suppose that $f$ is sufficiently smooth with $f(0)=0$ and that the kernel $\alpha$ is to be chosen to satisfy  the following two constraints:
\begin{enumerate}[label={ C\arabic*.}, align=left]
 {    \item $\alpha \in W^{2,1}(\mathbb{R})$,

    \item $\alpha ^{\prime\prime\prime}=\mu $ is a finite Borel measure on $\mathbb{R}$.}
\end{enumerate}

At this point, it should be noted that Condition C2 also includes the more regular case $\alpha \in W^{3,1}(\mathbb{R})$ (i.e. $\alpha^{\prime\prime\prime}\in  L^{1}(\mathbb{R})$) in which case $d\mu=\alpha ^{\prime\prime\prime}dx$. The following theorem states  the local well-posedness of solutions to (\ref{eq:cont1})-(\ref{eq:initial}).
\begin{theorem}\label{theo:theo2.1}
    Suppose that $\alpha$ satisfies Conditions  C1 and C2. Let $s>1/2$, $f\in C^{[s] +1}(\mathbb{R})$ with $f(0)=0$. For a given $\varphi \in H^{s}(\mathbb{R})$, there is some $T>0$ so that the initial-value problem (\ref{eq:cont1})-(\ref{eq:initial}) is locally well-posed with solution $u\in C^{1}\left([0,T],H^{s}(\mathbb{R})\right)$, satisfying the estimate
    \begin{equation}
        {\left\Vert u(t)\right\Vert }_{H^{s}}\leq \left\Vert \varphi \right\Vert_{H^{s}}e^{Ct},~~~~0\leq t\leq T,
        \label{growth23}
    \end{equation}
    where $C$ depends on  $M=\sup_{0\leq t\leq T}\left\Vert u(t)\right\Vert_{L^{\infty}}$ and $\alpha$.
\end{theorem}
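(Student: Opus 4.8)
The plan is to recast (\ref{eq:cont1})--(\ref{eq:initial}) as an abstract ordinary differential equation
\[
u_{t}=F(u),\qquad F(u)=-\alpha^{\prime}\ast f(u)-\kappa\,\mu\ast u,
\]
in the Banach space $H^{s}(\mathbb{R})$, and then invoke the standard Picard--Lindel\"of existence and uniqueness theorem for ODEs with a locally Lipschitz vector field. Everything reduces to showing that $F\colon H^{s}\to H^{s}$ is well defined and locally Lipschitz; once a continuous solution of the associated integral equation is obtained, $u_{t}=F(u)\in C([0,T],H^{s})$ gives the $C^{1}$ regularity in time for free.

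First I would dispose of the two convolution factors, which are harmless on $H^{s}$ because they act as bounded Fourier multipliers. For the linear term, Condition C2 gives $\widehat{\mu}\in L^{\infty}$ with $\|\widehat{\mu}\|_{L^{\infty}}\le\|\mu\|_{M}$, where $\|\mu\|_{M}$ denotes the total variation; since convolution commutes with derivatives this yields $\|\mu\ast u\|_{H^{s}}\le\|\mu\|_{M}\|u\|_{H^{s}}$, so $u\mapsto\kappa\,\mu\ast u$ is a bounded linear operator on $H^{s}$. For the nonlinear term, Condition C1 gives $\alpha^{\prime}\in W^{1,1}\subset L^{1}$, and from $|\widehat{\alpha^{\prime}}(\xi)|\le\|\alpha^{\prime}\|_{L^{1}}$ we likewise obtain $\|\alpha^{\prime}\ast g\|_{H^{s}}\le\|\alpha^{\prime}\|_{L^{1}}\|g\|_{H^{s}}$.

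The substance is then the Nemytskii (composition) operator $u\mapsto f(u)$. Using $s>1/2$, so that $H^{s}$ is a Banach algebra continuously embedded in $L^{\infty}$, together with $f\in C^{[s]+1}$ and $f(0)=0$, I would quote the standard Moser-type composition estimates (as used in \cite{Erbay2018,Erbay2021}) to obtain $\|f(u)\|_{H^{s}}\le C(\|u\|_{L^{\infty}})\|u\|_{H^{s}}$ and the local Lipschitz bound $\|f(u)-f(v)\|_{H^{s}}\le C(\|u\|_{H^{s}},\|v\|_{H^{s}})\|u-v\|_{H^{s}}$. Combining these with the convolution bounds shows that $F$ is locally Lipschitz, so Picard iteration produces a unique $u\in C^{1}([0,T],H^{s})$ for some $T>0$ on which the solution remains bounded. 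I expect the composition estimate for non-integer $s$ to be the only delicate point, but since it is classical and already invoked in the cited works, it can be cited rather than reproved.

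Finally, to establish the growth bound (\ref{growth23}) I would pair the equation with $u$ in the $H^{s}$ inner product, giving
\[
\tfrac12\frac{d}{dt}\|u\|_{H^{s}}^{2}=-\langle\alpha^{\prime}\ast f(u),u\rangle_{H^{s}}-\kappa\langle\mu\ast u,u\rangle_{H^{s}}.
\]
Using $f(0)=0$ to write $f(u)=\big(\int_{0}^{1}f^{\prime}(\tau u)\,d\tau\big)u$, so that $\|f(u)\|_{H^{s}}\le C(M)\|u\|_{H^{s}}$ with $M=\sup_{0\le t\le T}\|u(t)\|_{L^{\infty}}$, and bounding the right-hand side by Cauchy--Schwarz together with the convolution estimates above, I obtain $\frac{d}{dt}\|u\|_{H^{s}}^{2}\le 2C\|u\|_{H^{s}}^{2}$ with $C$ depending only on $M$ and $\alpha$. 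Gronwall's inequality then yields (\ref{growth23}).
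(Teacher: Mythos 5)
Your proposal is correct and takes essentially the same route as the paper: the paper's proof is exactly an appeal to Picard's theorem for Banach space-valued ODEs after writing the equation as $u_t=F(u)$ with $F$ locally Lipschitz on $H^s$ (via the $L^1$/finite-measure convolution bounds from C1--C2 and the Moser-type composition estimate), with the growth bound (\ref{growth23}) coming from the $L^\infty$ control $\Vert f(u)\Vert_{H^s}\leq C(\Vert u\Vert_{L^\infty})\Vert u\Vert_{H^s}$ plus Gronwall, as detailed in the reference the paper cites. Your only cosmetic deviation is differentiating $\Vert u\Vert_{H^s}^2$ rather than applying Gronwall to the integral form of the equation, which is immaterial.
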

The proof of Theorem \ref{theo:theo2.1} follows from Picard's theorem  for Banach space-valued ODEs. The  estimate (\ref{growth23})  is derived from the $L^{\infty}$ control of the nonlinear term (see  \cite{Duruk2010} for details). This estimate, in particular, implies that blow-up in finite time is also determined by the $L^{\infty}$ norm of the solution.

\subsection{Discretization}

As in \cite{Erbay2018} and   \cite{Erbay2021}, we  consider doubly infinite sequences $\mathbf{w}=(w_{i})_{i=-\infty}^{i=\infty}=(w_{i})$ of real numbers  $w_{i}$ with $i\in\mathbb{Z}$  (where $\mathbb{Z}$ denotes the set of integers). Let $h>0$ denote the spatial mesh size to be used in the
discretization. The  $l_{h}^{p}(\mathbb{Z})$ space  for  $1\leq p<\infty $ is
\begin{displaymath}
   l_{h}^{p}\left(\mathbb{Z}\right)=\left\{ (w_{i}): w_{i}\in \mathbb{R}, ~~
        \Vert \mathbf{w}\Vert_{l_{h}^{p}}^{p}=\sum_{i=-\infty }^{\infty}h|w_{i}|^{p}\right\}.
\end{displaymath}
The $l^{\infty}(\mathbb{Z})$ space with the sup-norm $\displaystyle \Vert \mathbf{w}\Vert_{l^{\infty}}=\sup_{i \in\mathbb{Z}} \left \vert w_{i} \right \vert$ is a Banach space.  The discrete convolution operation denoted by the symbol $*$ transforms two sequences  $\mathbf{w}$ and $\mathbf{v}$ into a new sequence:
\begin{equation}
    (\mathbf{w}\ast \mathbf{v})_{i}=\sum_{j}hw_{i-j}v_{j} \label{eq:disc-con}
\end{equation}%
(from now on, the symbol $\displaystyle \sum_{j}$ will be used to denote summation over all $j\in\mathbb{Z}$). Young's inequality for convolution integrals state that $\Vert \mathbf{w}\ast \mathbf{v}\Vert_{l_{h}^{p}}\leq \Vert \mathbf{w}\Vert _{l_{h}^{1}}\Vert \mathbf{v}\Vert _{l_{h}^{p}}$ for
$\mathbf{w}\in l_{h}^{1}, \mathbf{v}\in l_{h}^{p}$, $1\leq p <\infty $ and $\Vert \mathbf{w}\ast \mathbf{v}\Vert _{l^{\infty} }\leq \Vert \mathbf{w}\Vert_{l_{h}^{1}}\Vert \mathbf{v}\Vert_{l^{\infty}}$ for $\mathbf{w}\in l_{h}^{1},\mathbf{v}\in l^{\infty }$.

Let $w$ be a function of one variable $x$ with domain $\mathbb{R}$. We begin with a uniform partition of the  real line and define   the grid points $x_{i}=ih$, $i\in \mathbb{Z}$   with the mesh size $h$. Let the restriction operator $\mathbf{R}$ be   $\mathbf{R}w=(w(x_{i}))$.  From now on we will use the abbreviations $\mathbf{w}$ and $\mathbf{w}^{\prime }$ for $\mathbf{R}w$ and  $\mathbf{R}w^{\prime }$,  respectively.

The next lemma provides an  error bound for the discrete (trapezoidal) approximations of the integral over $\mathbb{R}$ depending on the smoothness of the integrand.
\begin{lemma}[\cite{Erbay2018}]\label{lem:lem2.2}
        Let $w\in W^{1,1}(\mathbb{R})$ and  $w^{\prime\prime}=\nu$ be a finite measure  on $\mathbb{R}$. Then
        \begin{equation}
            \left\vert \int_\mathbb{R} w(x)dx-\sum_{i}h w(x_{i})\right \vert \leq h^{2}\vert \nu\vert (\mathbb{R}). \label{eq:intB}
        \end{equation}
   Moreover,  $\mathbf{w}=\mathbf{R}w\in l_{h}^{1}$ and $\Vert \mathbf{w}\Vert _{l_{h}^{1}}\leq \Vert w\Vert _{l^{1}}+h^{2}\vert\nu \vert (\mathbb{R})$.
\end{lemma}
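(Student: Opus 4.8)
The plan is to reduce the global estimate to a cell-by-cell comparison and to exploit the fact that, on the whole line, the rectangle sum $\sum_{i}h\,w(x_{i})$ coincides with the composite trapezoidal rule. Writing $\sum_{i}h\,w(x_{i})=\sum_{i}\tfrac{h}{2}\bigl(w(x_{i})+w(x_{i+1})\bigr)$ — the boundary contributions telescope, and this rearrangement is legitimate because $w\in W^{1,1}(\mathbb{R})$ forces $w(x_{i})\to 0$ together with $\sum_{i}|w(x_{i})|<\infty$ — it suffices to estimate on each cell $[x_{i},x_{i+1}]$ the local trapezoidal error $E_{i}=\int_{x_{i}}^{x_{i+1}}w\,dx-\tfrac{h}{2}\bigl(w(x_{i})+w(x_{i+1})\bigr)$ and then to sum the resulting bounds.

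On a single cell I would introduce the nonnegative Peano kernel $\phi_{i}(x)=\tfrac12(x-x_{i})(x_{i+1}-x)$, which vanishes at both endpoints, obeys $0\le\phi_{i}\le h^{2}/8$, and satisfies $\phi_{i}^{\prime\prime}=-1$. Two successive integrations by parts — the first carried out in the Stieltjes sense against the measure $\nu=w^{\prime\prime}$, since here $w^{\prime\prime}$ is only a measure and not an $L^{1}$ function — give the exact representation $E_{i}=-\int_{(x_{i},x_{i+1}]}\phi_{i}\,d\nu$, the boundary terms generated at $x_{i},x_{i+1}$ cancelling precisely because $\phi_{i}$ vanishes there. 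Taking absolute values and using $\phi_{i}\le h^{2}/8$ yields $|E_{i}|\le\tfrac{h^{2}}{8}\,|\nu|\bigl((x_{i},x_{i+1}]\bigr)$; since the half-open cells partition $\mathbb{R}$, summation over $i$ gives $\bigl|\int_{\mathbb{R}}w\,dx-\sum_{i}h\,w(x_{i})\bigr|\le\tfrac{h^{2}}{8}|\nu|(\mathbb{R})\le h^{2}|\nu|(\mathbb{R})$, which is (\ref{eq:intB}).

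For the ``moreover'' part, membership $\mathbf{w}\in l_{h}^{1}$ follows at once from the crude cell bound $h|w(x_{i})|\le\int_{x_{i}}^{x_{i+1}}|w|\,dx+h\int_{x_{i}}^{x_{i+1}}|w^{\prime}|\,dt$ (triangle inequality together with $w(x_{i})-w(x)=-\int_{x_{i}}^{x}w^{\prime}$), which after summation already shows $\Vert\mathbf{w}\Vert_{l_{h}^{1}}\le\Vert w\Vert_{L^{1}}+h\Vert w^{\prime}\Vert_{L^{1}}<\infty$. To obtain the sharper stated bound I would instead apply the estimate (\ref{eq:intB}) just proved to the function $|w|$ in place of $w$: one checks that $|w|\in W^{1,1}(\mathbb{R})$ with distributional second derivative again a finite measure, whence $\sum_{i}h|w(x_{i})|\le\int_{\mathbb{R}}|w|\,dx+h^{2}|\nu|(\mathbb{R})=\Vert w\Vert_{L^{1}}+h^{2}|\nu|(\mathbb{R})$.

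The only genuinely delicate point is measure-theoretic rather than computational: because $w^{\prime\prime}$ is merely a finite Borel measure, the two integrations by parts on each cell must be performed in the Stieltjes framework, and one must justify the interchange of summation with integration and the vanishing of every boundary and limit term. Both are guaranteed by the decay $w,w^{\prime}\to0$ at $\pm\infty$, which is itself a consequence of $w\in W^{1,1}(\mathbb{R})$ and of $w^{\prime}$ being of bounded variation. Once this bookkeeping is in place, the classical $h^{2}$ trapezoidal scaling emerges directly from the single uniform bound $\phi_{i}\le h^{2}/8$ on the Peano kernel.
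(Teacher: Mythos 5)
The paper itself does not prove this lemma (it is quoted verbatim from \cite{Erbay2018}), so your argument can only be judged on its own terms. The first half is fine: the rearrangement of $\sum_i h\,w(x_i)$ into a global trapezoid sum is legitimate since $\sum_i|w(x_i)|<\infty$ for $w\in W^{1,1}(\mathbb{R})$, and the cell representation $E_i=-\int_{(x_i,x_{i+1}]}\phi_i\,d\nu$ is correct (for smooth $w$ two integrations by parts give $\int_{x_i}^{x_{i+1}}\phi_i w''\,dx=\tfrac{h}{2}\bigl(w(x_i)+w(x_{i+1})\bigr)-\int_{x_i}^{x_{i+1}}w\,dx$, and the measure case follows by approximation, endpoint atoms of $\nu$ being harmless because $\phi_i$ vanishes there). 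This even yields the sharper constant $h^2/8$ in place of $h^2$.

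The ``moreover'' part, however, has a genuine gap. Applying \eqref{eq:intB} to $|w|$ and writing $\sum_i h|w(x_i)|\le\Vert w\Vert_{L^1}+h^2|\nu|(\mathbb{R})$ tacitly assumes that the distributional second derivative of $|w|$ is a measure whose total variation is at most $|\nu|(\mathbb{R})$. That is false in general: at every transversal sign change $a$ of $w$, the derivative $(|w|)'=\mathrm{sgn}(w)\,w'$ jumps by $2|w'(a)|$, so $(|w|)''$ acquires an atom of that mass on top of $\mathrm{sgn}(w)\,\nu$. Concretely, for $w(x)=\sin x$ on $[0,2\pi]$ and $w=0$ elsewhere, one computes $|\nu|(\mathbb{R})=2+\int_0^{2\pi}|\sin x|\,dx=6$, while $\bigl|(|w|)''\bigr|(\mathbb{R})=8$ because of the extra corner at $x=\pi$; so the inequality you invoke, with the constant you wrote, does not hold for $|w|$. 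Your route can be salvaged --- an alternating-slope argument shows the slopes at sign changes sum to at most $|\nu|(\mathbb{R})$, hence $\bigl|(|w|)''\bigr|(\mathbb{R})\le 3|\nu|(\mathbb{R})$, which combined with your sharper $h^2/8$ still gives the stated bound --- but none of this analysis is in your proof, and with the constant $h^2$ as literally stated the chain would not close. A cleaner fix avoids $|w|$ altogether: use midpoint cells $I_i=[x_i-\tfrac{h}{2},x_i+\tfrac{h}{2}]$, for which the same Peano-kernel computation gives $\bigl|h\,w(x_i)-\int_{I_i}w\,dx\bigr|\le\tfrac{h^2}{8}|\nu|(I_i)$; then $h|w(x_i)|\le\int_{I_i}|w|\,dx+\tfrac{h^2}{8}|\nu|(I_i)$, and summing over $i$ yields both $\mathbf{w}\in l_h^1$ and $\Vert\mathbf{w}\Vert_{l_h^1}\le\Vert w\Vert_{L^1}+h^2|\nu|(\mathbb{R})$, while the same decomposition also proves \eqref{eq:intB}, making the whole proof uniform.
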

Let  $D^{2}_{h}$ be the second-order difference operator  defined as
\begin{displaymath}
    \left(D^{2}_{h}\mathbf{u}\right)_{i}={1\over h^{2}}\left(u_{i+1}-2u_{i}+u_{i-1}\right).
\end{displaymath}
An error bound for  the approximation to the second derivative by  $D^{2}_{h}$   is reported in the following lemma.
\begin{lemma}\label{lem:lem2.3}
    Let $u\in W^{4,\infty}(\mathbb{R})$, $\mathbf{u}=\mathbf{R}u$ and $\mathbf{u}^{\prime \prime }=\mathbf{R}u^{\prime \prime }$. Then
    \begin{displaymath}
        \Vert D^{2}_{h}\mathbf{u}-\mathbf{u}^{\prime \prime }\Vert _{l^{\infty}}\leq \frac{h^{2}}{12}\Vert u^{(4)}\Vert _{L^{\infty}}.
    \end{displaymath}
\end{lemma}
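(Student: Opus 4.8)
The plan is to estimate the error pointwise at each grid point $x_i$ and then take the supremum over $i$. Unwinding the definitions, at a fixed $x_i$ the quantity to be controlled is
\[
    \left(D^{2}_{h}\mathbf{u}\right)_{i}-u^{\prime\prime}(x_i)=\frac{u(x_i+h)-2u(x_i)+u(x_i-h)}{h^{2}}-u^{\prime\prime}(x_i),
\]
which is the classical truncation error of the symmetric second difference. The guiding idea is to exploit the symmetry of the three-point stencil so that the odd-order Taylor contributions cancel, leaving a remainder governed entirely by $u^{(4)}$.

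To carry this out I would fix $x_i$ and introduce the even function $\phi(t)=u(x_i+t)+u(x_i-t)$, so that $u(x_i+h)-2u(x_i)+u(x_i-h)=\phi(h)-\phi(0)$. Differentiating, $\phi^{\prime}(0)=0$ and $\phi^{\prime\prime\prime}(0)=0$, because the odd derivatives of $u$ enter with opposite signs, while $\phi^{\prime\prime}(0)=2u^{\prime\prime}(x_i)$. Applying Taylor's theorem with the integral form of the remainder to $\phi$ up to third order then yields
\[
    \phi(h)-\phi(0)=h^{2}u^{\prime\prime}(x_i)+\frac{1}{6}\int_{0}^{h}(h-t)^{3}\,\phi^{(4)}(t)\,dt,
\]
so that the truncation error equals $\frac{1}{6h^{2}}\int_{0}^{h}(h-t)^{3}\phi^{(4)}(t)\,dt$.

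It then remains only to bound this remainder. Since $\phi^{(4)}(t)=u^{(4)}(x_i+t)+u^{(4)}(x_i-t)$, we have $|\phi^{(4)}(t)|\le 2\Vert u^{(4)}\Vert_{L^{\infty}}$, and using $\int_{0}^{h}(h-t)^{3}\,dt=h^{4}/4$ one obtains the uniform bound $\frac{h^{2}}{12}\Vert u^{(4)}\Vert_{L^{\infty}}$, independent of $i$; taking the supremum over $i$ completes the proof. The only point requiring genuine care is the regularity assumption: because $u\in W^{4,\infty}(\mathbb{R})$ rather than $C^{4}(\mathbb{R})$, the Lagrange mean-value form of the remainder is not directly available, so I would deliberately use the integral form, which needs only that $u^{\prime\prime\prime}$ be absolutely continuous with $u^{(4)}\in L^{\infty}$ --- precisely what membership in $W^{4,\infty}$ guarantees. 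This is the sole subtlety; the cancellation of the odd terms and the evaluation of the constant $1/12$ are then routine.
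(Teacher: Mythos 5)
Your proof is correct, including the constant $1/12$: the symmetrization $\phi(t)=u(x_i+t)+u(x_i-t)$ kills the odd-order Taylor terms, the integral-remainder computation $\frac{1}{6h^2}\int_0^h(h-t)^3\phi^{(4)}(t)\,dt$ is right, and the bound $\frac{1}{6h^2}\cdot 2\Vert u^{(4)}\Vert_{L^\infty}\cdot\frac{h^4}{4}=\frac{h^2}{12}\Vert u^{(4)}\Vert_{L^\infty}$ follows. The paper itself states this lemma without proof, treating it as the classical truncation-error estimate for the centered second difference; your argument is exactly that standard proof, and your observation that the integral form of the remainder (rather than the Lagrange form) is what makes the argument legitimate under the mere $W^{4,\infty}$ hypothesis --- since then $u'''$ is Lipschitz, hence absolutely continuous, with $u^{(4)}\in L^\infty$ defined a.e. --- is a genuine point of care that the paper's terse statement glosses over.
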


\setcounter{equation}{0}
\section{ The Semi-Discrete Problem  and Discretization Error}\label{sec:sec3}

In this section we introduce the semi-discrete formulation of the Cauchy problem and prove convergence of solutions of the semi-discrete problem to solutions of the continuous Cauchy problem.

\subsection{The Semi-Discrete Problem}

We now formulate  the semi-discrete problem associated with (\ref{eq:cont1})-(\ref{eq:initial})  on a mesh with a fixed spatial mesh size $h>0$.  In that respect, it will be convenient to discretize the term $\alpha^{\prime\prime\prime}\ast u$  in  (\ref{eq:cont1})  by $D^{2}_{h}\bm{\alpha}^{\prime}_{h}\ast \mathbf{v}$ with the notation $\bm{\alpha}^{\prime}_{h}=\mathbf{R}\alpha^{\prime}$. The reason behind this is
\begin{equation*}
                \alpha^{\prime\prime\prime}\ast u=\alpha^{\prime}\ast u^{\prime\prime}
                        \approx \bm{\alpha}^{\prime}_{h}\ast D^{2}_{h} \mathbf{v}=D^{2}_{h}\bm{\alpha}^{\prime}_{h}\ast \mathbf{v}.
\end{equation*}
 Thus, the discretized form of the nonlocal wave equation (\ref{eq:cont1}) on the uniform infinite grid becomes
\begin{equation}
    \frac{d \mathbf{v}}{dt} =-\bm{\alpha}^{\prime}_{h}\ast f(\mathbf{v})-\kappa D^{2}_{h}\bm{\alpha}^{\prime}_{h}\ast \mathbf{v}   \label{eq:disc}
\end{equation}%
with the notation  $f(\mathbf{v})=(f(v_{i}))$. We estimate $D^{2}_{h}\bm{\alpha}^{\prime}_{h}$ as follows.
\begin{lemma}\label{lem:lem3.1}
    $D^{2}_{h}\bm{\alpha}^{\prime}_{h}\in l_{h}^{1}$ and $\Vert D^{2}_{h}\bm{\alpha}^{\prime}_{h}\Vert _{l_{h}^{1}}\leq 2|\mu| (\mathbb{R}).$
\end{lemma}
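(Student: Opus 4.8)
The plan is to derive a pointwise integral representation of each entry $(D^{2}_{h}\bm{\alpha}^{\prime}_{h})_{i}$ in terms of the measure $\mu=\alpha^{\prime\prime\prime}$, and then to sum over $i\in\mathbb{Z}$ using Tonelli's theorem together with a simple covering argument on the grid. First I would rewrite the second difference of the sampled first derivative. Since $\alpha\in W^{2,1}(\mathbb{R})$ by C1, the function $\alpha^{\prime}$ is absolutely continuous, so the fundamental theorem of calculus gives, for each $i$,
\[
\alpha^{\prime}((i+1)h)-2\alpha^{\prime}(ih)+\alpha^{\prime}((i-1)h)
=\int_{0}^{h}\big(\alpha^{\prime\prime}(ih+t)-\alpha^{\prime\prime}((i-1)h+t)\big)\,dt .
\]

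Next, because $\alpha^{\prime\prime\prime}=\mu$ is a finite Borel measure by C2, the function $\alpha^{\prime\prime}$ is of bounded variation, and its increment over an interval of length $h$ is controlled by the variation of $\mu$ there: for a.e.\ $t$,
\[
\big|\alpha^{\prime\prime}(ih+t)-\alpha^{\prime\prime}((i-1)h+t)\big|\le |\mu|\big([(i-1)h+t,\,ih+t]\big).
\]
Combining the two displays and dividing by $h^{2}$ yields the pointwise bound
\[
\big|(D^{2}_{h}\bm{\alpha}^{\prime}_{h})_{i}\big|\le \frac{1}{h^{2}}\int_{0}^{h}|\mu|\big([(i-1)h+t,\,ih+t]\big)\,dt .
\]

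Then I would sum. Multiplying by $h$, summing over $i$, and interchanging sum and integral by Tonelli (everything being nonnegative) gives
\[
\Vert D^{2}_{h}\bm{\alpha}^{\prime}_{h}\Vert_{l_{h}^{1}}\le \frac{1}{h}\int_{0}^{h}\Big(\sum_{i}|\mu|\big([(i-1)h+t,\,ih+t]\big)\Big)\,dt .
\]
The geometric heart of the argument is the inner sum: for each fixed $t\in(0,h)$ the closed intervals $[(i-1)h+t,\,ih+t]$ are consecutive, of length $h$, and cover $\mathbb{R}$ while overlapping only at their endpoints, so every point of $\mathbb{R}$ lies in at most two of them. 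Hence $\sum_{i}|\mu|([(i-1)h+t,\,ih+t])\le 2|\mu|(\mathbb{R})$ uniformly in $t$, and carrying this through the $t$-integration produces $\Vert D^{2}_{h}\bm{\alpha}^{\prime}_{h}\Vert_{l_{h}^{1}}\le 2|\mu|(\mathbb{R})$, which in particular shows $D^{2}_{h}\bm{\alpha}^{\prime}_{h}\in l_{h}^{1}$.

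The step I expect to be the main obstacle is the passage from $\alpha^{\prime\prime}$ to $\mu$: one must interpret $\alpha^{\prime\prime}$ as a bounded-variation function and bound its increments by $|\mu|$ even though C2 permits $\mu$ to have atoms (so $\alpha^{\prime\prime}$ need not be continuous). I would handle this by fixing the right-continuous representative of $\alpha^{\prime\prime}$, for which $\alpha^{\prime\prime}(b)-\alpha^{\prime\prime}(a)=\mu((a,b])$ holds exactly, and noting that this representative agrees a.e.\ with the $L^{1}$ function $\alpha^{\prime\prime}$ appearing under the $t$-integral, so the increment bound holds for a.e.\ $t$ as needed. I would also remark that the constant $2$ is not sharp: replacing the closed-interval covering by the exact tent partition $\sum_{i}(h-|y-x_{i}|)_{+}\equiv h$ yields the constant $1$, but the cruder covering estimate is enough here and keeps the proof short.
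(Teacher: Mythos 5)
Your proof is correct, and it takes essentially the same route as the proof the paper defers to (Lemma 3.4 of the authors' earlier work \cite{Erbay2018}): write the second difference of $\alpha'$ via the fundamental theorem of calculus as an integral of increments of $\alpha''$, bound each increment by $|\mu|$ of a closed interval of length $h$, and sum using the fact that each point of $\mathbb{R}$ lies in at most two of these intervals, which is precisely the source of the constant $2$ in the statement. Your two closing remarks are also sound: the right-continuous BV representative of $\alpha''$ handles possible atoms of $\mu$, and the tent-partition identity $\sum_i (h-|y-x_i|)_+ \equiv h$ would indeed sharpen the constant to $1$, though the cruder covering bound suffices for the lemma as stated.
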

The proof of this lemma follows closely the proof of Lemma 3.4 of  \cite{Erbay2018}, so we skip it.

We note that  (\ref{eq:disc}) is an $l^{\infty}$-valued ODE system. The following theorem establishes  local well-posedness of solutions to the Cauchy problem.
\begin{theorem}\label{theo:theo3.2}
        Let $f$  be a locally Lipschitz function with $f(0)=0$. Then the initial-value problem for (\ref{eq:disc}) is locally well-posed for initial data $\mathbf{v}(0)$ in $l^{\infty}$. Moreover there exists some maximal time $T_{h}>0$ so that the problem has unique solution $\mathbf{v}\in C^{1}([0,T_{h}),l^{\infty})$. The maximal time $T_{h}$, if finite, is determined by the blow-up condition
         \begin{equation}
                \limsup_{t\rightarrow T_{h}^{-}}\Vert \mathbf{v}(t)\Vert_{l^{\infty} }=\infty .  \label{eq:blow2a}
        \end{equation}
\end{theorem}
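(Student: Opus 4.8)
The plan is to read (\ref{eq:disc}) as an autonomous ordinary differential equation $\frac{d\mathbf{v}}{dt}=F(\mathbf{v})$ in the Banach space $l^{\infty}$, where
\[
   F(\mathbf{v})=-\bm{\alpha}^{\prime}_{h}\ast f(\mathbf{v})-\kappa D^{2}_{h}\bm{\alpha}^{\prime}_{h}\ast\mathbf{v},
\]
and to invoke Picard's theorem for Banach-space-valued ODEs, exactly as for the $\kappa=0$ case in \cite{Erbay2021}. The whole argument reduces to verifying that $F$ maps $l^{\infty}$ into $l^{\infty}$ and is locally Lipschitz there; granting this, Picard's theorem supplies a unique local solution $\mathbf{v}\in C^{1}$, and a standard continuation argument yields the maximal existence time $T_{h}$ together with the blow-up alternative (\ref{eq:blow2a}).

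First I would record that both convolution sequences lie in $l_{h}^{1}$. Since $\alpha\in W^{2,1}$ by Condition C1, the function $\alpha^{\prime}$ belongs to $W^{1,1}$ and satisfies $(\alpha^{\prime})^{\prime\prime}=\alpha^{\prime\prime\prime}=\mu$, a finite measure by Condition C2; hence Lemma \ref{lem:lem2.2} gives $\bm{\alpha}^{\prime}_{h}=\mathbf{R}\alpha^{\prime}\in l_{h}^{1}$, while Lemma \ref{lem:lem3.1} gives $D^{2}_{h}\bm{\alpha}^{\prime}_{h}\in l_{h}^{1}$. By Young's inequality $\Vert\mathbf{w}\ast\mathbf{v}\Vert_{l^{\infty}}\leq\Vert\mathbf{w}\Vert_{l_{h}^{1}}\Vert\mathbf{v}\Vert_{l^{\infty}}$, convolution against a fixed $l_{h}^{1}$ sequence is a bounded linear operator on $l^{\infty}$. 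In particular the linear term $\mathbf{v}\mapsto-\kappa D^{2}_{h}\bm{\alpha}^{\prime}_{h}\ast\mathbf{v}$ is globally Lipschitz on $l^{\infty}$.

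The nonlinear term is the composition of the Nemytskii map $\mathbf{v}\mapsto f(\mathbf{v})=(f(v_{i}))$ with convolution against $\bm{\alpha}^{\prime}_{h}$, and this is the one place where the hypotheses $f(0)=0$ and local Lipschitz continuity are used. On the ball $\Vert\mathbf{v}\Vert_{l^{\infty}}\leq r$ every entry obeys $|v_{i}|\leq r$, so if $L_{r}$ denotes the Lipschitz constant of $f$ on $[-r,r]$ then $|f(v_{i})|=|f(v_{i})-f(0)|\leq L_{r}|v_{i}|\leq L_{r}r$, whence $f(\mathbf{v})\in l^{\infty}$; similarly $|f(v_{i})-f(w_{i})|\leq L_{r}|v_{i}-w_{i}|$ for $\Vert\mathbf{v}\Vert_{l^{\infty}},\Vert\mathbf{w}\Vert_{l^{\infty}}\leq r$ gives $\Vert f(\mathbf{v})-f(\mathbf{w})\Vert_{l^{\infty}}\leq L_{r}\Vert\mathbf{v}-\mathbf{w}\Vert_{l^{\infty}}$. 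Thus the Nemytskii map is locally Lipschitz on $l^{\infty}$, and composing with the bounded convolution operator keeps $\mathbf{v}\mapsto-\bm{\alpha}^{\prime}_{h}\ast f(\mathbf{v})$ locally Lipschitz. Summing the two contributions shows that $F$ is locally Lipschitz, which is the only nontrivial ingredient.

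Finally, with $F$ locally Lipschitz, Picard's theorem produces a unique solution on a maximal interval $[0,T_{h})$ with $\mathbf{v}\in C^{1}([0,T_{h}),l^{\infty})$. For the blow-up criterion (\ref{eq:blow2a}) I would argue by contradiction in the usual way: if $T_{h}<\infty$ but $\limsup_{t\to T_{h}^{-}}\Vert\mathbf{v}(t)\Vert_{l^{\infty}}<\infty$, then the trajectory stays inside a fixed ball on which $F$ is Lipschitz with a uniform constant, so the guaranteed local existence time has a positive lower bound independent of the starting point; restarting the Picard iteration from a time sufficiently close to $T_{h}$ then extends the solution past $T_{h}$, contradicting maximality. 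I expect no genuine obstacle here, since the structure is identical to the $\kappa=0$ case in \cite{Erbay2021}; the only new feature is the extra bounded linear term controlled by Lemma \ref{lem:lem3.1}, and the only point requiring care is the local Lipschitz estimate for the Nemytskii operator, where $f(0)=0$ is essential to keep $f(\mathbf{v})$ bounded.
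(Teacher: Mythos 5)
Your proposal is correct and is exactly the argument the paper intends: the paper states only that (\ref{eq:disc}) is an $l^{\infty}$-valued ODE system and leaves the proof to Picard's theorem for Banach-space-valued ODEs (as in \cite{Erbay2021}), which is precisely what you carry out. Your verification that $\bm{\alpha}^{\prime}_{h}\in l_{h}^{1}$ via Lemma \ref{lem:lem2.2}, the use of Lemma \ref{lem:lem3.1} and Young's inequality for the linear term, the local Lipschitz estimate for the Nemytskii map, and the standard continuation argument for the blow-up alternative (\ref{eq:blow2a}) are all sound and fill in the details the paper omits.
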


\subsection{ An Estimate for the Discretization Error}

Let  $u\in C^{1}\left([0,T],H^{s}(\mathbb{R})\right)$ with sufficiently large $s$   be the unique solution of the continuous problem  (\ref{eq:cont1})-(\ref{eq:initial}). We denote the discretizations of the initial data $\varphi$   by  $\bm{\varphi}_{h}=\mathbf{R}\varphi$. Suppose that $\mathbf{u}_{h}\in C^{1}\left([0,T_{h}),l^{\infty}\right)$ is the unique solution of  the discrete problem based on (\ref{eq:disc}) and  the initial data $\bm{\varphi}_{h}$. In this subsection, our goal is to estimate the discretization error defined as $\mathbf{R}u(t)-\mathbf{u}_{h}$.
In the following theorem we prove that the discretization error is of $\mathcal{O}(h^2)$ for the case where  the kernel function $\alpha$ satisfies Conditions C1 and C2. The proof  follows  similar lines as the corresponding one in \cite{Erbay2018}.
\begin{theorem}\label{theo:theo3.3}
    Suppose that $\alpha$ satisfies Conditions C1 and C2.     Let $s>9/2$, $ f\in C^{[ s] +1}(\mathbb{R})$ with $f(0)=0$.  Let $u\in C^{1}\left([0,T], H^{s}(\mathbb{R})\right)$ be the solution of the initial-value problem (\ref{eq:cont1})-(\ref{eq:initial}) with $~\varphi \in H^{s}(\mathbb{R})$. Similarly, let  $\mathbf{u}_{h}\in C^{1}\left([0,T_{h}),l^{\infty}\right)$ be the solution of  (\ref{eq:disc}) with initial data $\bm{\varphi}_{h}$.   Let $\mathbf{u}(t)=\mathbf{R}u(t) =(u(x_{i},t))$. Then there is some $h_{0}$ so that for $h\leq h_{0}$, the maximal existence time $T_{h}$ of $\mathbf{u}_{h}$ is at least $T$ and
    \begin{equation}
        \Vert \mathbf{u}(t)-\mathbf{u}_{h}(t)\Vert_{l^{\infty}}=\mathcal{O}(h^{2})    \label{eq:fourone}
    \end{equation}
    for all $t\in \lbrack 0,T \rbrack$.
\end{theorem}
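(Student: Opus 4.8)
The plan is to control the error sequence $\mathbf{e}(t)=\mathbf{u}(t)-\mathbf{u}_{h}(t)$ directly in $l^{\infty}$ through a Gronwall argument applied to the integrated error equation. Since $u$ solves (\ref{eq:cont1}), its restriction obeys $\frac{d\mathbf{u}}{dt}=-\mathbf{R}\big(\alpha^{\prime}\ast f(u)\big)-\kappa\,\mathbf{R}\big(\alpha^{\prime\prime\prime}\ast u\big)$, whereas $\mathbf{u}_{h}$ solves (\ref{eq:disc}). Subtracting the two and inserting the intermediate quantities $\bm{\alpha}^{\prime}_{h}\ast f(\mathbf{u})$ and $\kappa D^{2}_{h}\bm{\alpha}^{\prime}_{h}\ast\mathbf{u}$ gives
\begin{align*}
\frac{d\mathbf{e}}{dt}
&=\big[\bm{\alpha}^{\prime}_{h}\ast f(\mathbf{u})-\mathbf{R}\big(\alpha^{\prime}\ast f(u)\big)\big]
+\kappa\big[D^{2}_{h}\bm{\alpha}^{\prime}_{h}\ast\mathbf{u}-\mathbf{R}\big(\alpha^{\prime\prime\prime}\ast u\big)\big] \\
&\quad+\bm{\alpha}^{\prime}_{h}\ast\big[f(\mathbf{u}_{h})-f(\mathbf{u})\big]
-\kappa D^{2}_{h}\bm{\alpha}^{\prime}_{h}\ast\mathbf{e}.
\end{align*}
I would call the first two bracketed sequences the consistency errors $\mathbf{C}_{1}$ and $\mathbf{C}_{2}$; they are independent of $\mathbf{u}_{h}$, while the last two terms carry $\mathbf{e}$ itself. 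Because $\bm{\varphi}_{h}=\mathbf{R}\varphi$, the initial error vanishes, so integrating in time reduces the proof to bounding $\int_{0}^{t}\big(\Vert\mathbf{C}_{1}\Vert_{l^{\infty}}+\Vert\mathbf{C}_{2}\Vert_{l^{\infty}}\big)d\tau$ together with the error-carrying terms.

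The second step is to show $\Vert\mathbf{C}_{1}\Vert_{l^{\infty}},\Vert\mathbf{C}_{2}\Vert_{l^{\infty}}=\mathcal{O}(h^{2})$, which is where the regularity index $s>9/2$ enters: by Sobolev embedding it guarantees $u(t)\in W^{4,\infty}(\mathbb{R})$ uniformly on $[0,T]$. For $\mathbf{C}_{1}$, the $i$-th component is precisely the trapezoidal quadrature error of $w(y)=\alpha^{\prime}(x_{i}-y)f(u(y))$; since $w^{\prime\prime}$ consists of the finite measure $\mu=\alpha^{\prime\prime\prime}$ (Condition C2) multiplied by the bounded factor $f(u)$, plus $L^{1}$ terms built from $\alpha^{\prime},\alpha^{\prime\prime}$ (Condition C1) and bounded derivatives of $f(u)$, Lemma \ref{lem:lem2.2} yields a bound $Ch^{2}$ uniform in $i$. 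For $\mathbf{C}_{2}$, I would use $\alpha^{\prime\prime\prime}\ast u=\alpha^{\prime}\ast u^{\prime\prime}$ and the commutation $D^{2}_{h}\bm{\alpha}^{\prime}_{h}\ast\mathbf{u}=\bm{\alpha}^{\prime}_{h}\ast D^{2}_{h}\mathbf{u}$, then split
\begin{equation*}
\bm{\alpha}^{\prime}_{h}\ast D^{2}_{h}\mathbf{u}-\mathbf{R}\big(\alpha^{\prime}\ast u^{\prime\prime}\big)
=\bm{\alpha}^{\prime}_{h}\ast\big(D^{2}_{h}\mathbf{u}-\mathbf{u}^{\prime\prime}\big)
+\big[\bm{\alpha}^{\prime}_{h}\ast\mathbf{u}^{\prime\prime}-\mathbf{R}\big(\alpha^{\prime}\ast u^{\prime\prime}\big)\big].
\end{equation*}
The first term is estimated by Young's inequality together with Lemma \ref{lem:lem2.3} (which needs $u^{(4)}\in L^{\infty}$) and the bound $\Vert\bm{\alpha}^{\prime}_{h}\Vert_{l_{h}^{1}}\le C$ coming from Lemma \ref{lem:lem2.2}; the second is again a trapezoidal error, now for $\alpha^{\prime}(x_{i}-y)u^{\prime\prime}(y)$. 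Both contributions are $\mathcal{O}(h^{2})$.

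The error-carrying terms are handled by Young's inequality: $\Vert\bm{\alpha}^{\prime}_{h}\ast[f(\mathbf{u}_{h})-f(\mathbf{u})]\Vert_{l^{\infty}}\le\Vert\bm{\alpha}^{\prime}_{h}\Vert_{l_{h}^{1}}\,L\,\Vert\mathbf{e}\Vert_{l^{\infty}}$, where $L$ is a Lipschitz constant for $f$ on the range of the two solutions, while $\Vert\kappa D^{2}_{h}\bm{\alpha}^{\prime}_{h}\ast\mathbf{e}\Vert_{l^{\infty}}\le2\kappa|\mu|(\mathbb{R})\Vert\mathbf{e}\Vert_{l^{\infty}}$ by Lemma \ref{lem:lem3.1}. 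Feeding these into the integrated error equation produces $\Vert\mathbf{e}(t)\Vert_{l^{\infty}}\le Ch^{2}t+K\int_{0}^{t}\Vert\mathbf{e}(\tau)\Vert_{l^{\infty}}d\tau$, and Gronwall's inequality then gives $\Vert\mathbf{e}(t)\Vert_{l^{\infty}}\le Ch^{2}e^{Kt}$.

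The main obstacle is that this Gronwall estimate is only conditional: the Lipschitz constant $L$ requires an a priori uniform bound on $\mathbf{u}_{h}$, yet (\ref{eq:disc}) guarantees existence only up to its own maximal time $T_{h}$, which could be smaller than $T$. I would close this with a standard continuation (bootstrap) argument. Set $M=\sup_{0\le t\le T}\Vert\mathbf{u}(t)\Vert_{l^{\infty}}$, which is finite since $u\in C([0,T],H^{s})$ and $H^{s}\hookrightarrow L^{\infty}$, and let $t^{*}$ be the supremum of times in $[0,\min(T,T_{h}))$ on which $\Vert\mathbf{u}_{h}\Vert_{l^{\infty}}\le M+1$. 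On $[0,t^{*}]$ both solutions lie in a fixed ball, so $f$ is Lipschitz with a single constant $L$ and the preceding estimate gives $\Vert\mathbf{e}(t)\Vert_{l^{\infty}}\le Ch^{2}e^{KT}$. Choosing $h_{0}$ so that $Ch_{0}^{2}e^{KT}<1$ forces $\Vert\mathbf{u}_{h}(t)\Vert_{l^{\infty}}\le M+\Vert\mathbf{e}(t)\Vert_{l^{\infty}}<M+1$ strictly whenever $h\le h_{0}$, so $t^{*}$ cannot be attained in the interior and $t^{*}=\min(T,T_{h})$. Together with the blow-up alternative (\ref{eq:blow2a}) of Theorem \ref{theo:theo3.2}, this rules out $T_{h}\le T$, hence $T_{h}>T$, and the bound (\ref{eq:fourone}) holds on all of $[0,T]$.
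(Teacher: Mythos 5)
Your proof is correct and takes essentially the same route as the paper's: the same error equation with consistency terms (your $\mathbf{C}_{1}$, $\mathbf{C}_{2}$ are the paper's $\mathbf{F}_{h}^{1}$, $\mathbf{F}_{h}^{2}/\kappa$), the same use of Lemma \ref{lem:lem2.2}, Lemma \ref{lem:lem2.3}, Lemma \ref{lem:lem3.1} and Young's inequality, and the same Gronwall-plus-continuation argument (the paper's bootstrap threshold is $2M$ where yours is $M+1$). The one point of divergence works in your favor: in the $\kappa$-term you explicitly split off the quadrature error $\bm{\alpha}^{\prime}_{h}\ast\mathbf{u}^{\prime\prime}-\mathbf{R}\big(\alpha^{\prime}\ast u^{\prime\prime}\big)$, a genuine $\mathcal{O}(h^{2})$ contribution that the paper's asserted identity $\mathbf{F}^{2}_{h}=\kappa\bm{\alpha}^{\prime}_{h}\ast\big(D^{2}_{h}\mathbf{u}-\mathbf{R}u^{\prime\prime}\big)$ tacitly drops (that equality is not exact, since $\bm{\alpha}^{\prime}_{h}\ast\mathbf{R}u^{\prime\prime}\neq\mathbf{R}(\alpha^{\prime}\ast u^{\prime\prime})$ in general), so your treatment of that step is in fact the more careful one.
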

\begin{proof}
    Suppose that $\displaystyle M=\max_{0\leq t\leq T}\Vert u(t)\Vert _{L^{\infty}}$. Clearly  $\Vert \bm{\varphi }_{h}\Vert _{l^{\infty }}\leq \Vert \varphi \Vert_{L^{\infty }}\leq M$. By continuity there is some maximal time $t_{h}\leq T$ such that $\Vert \mathbf{u}_{h}(t)\Vert _{l^{\infty}}\leq 2M$ for all $t\in \lbrack 0,t_{h}]$.  By the maximality condition we must have either $t_{h}=T$ or $\Vert \mathbf{u}_{h}(t_{h})\Vert _{l^{\infty}}=2M$.  Evaluating (\ref{eq:cont1}) at  the grid points $x_{i}$ yields a set of differential equations
    \begin{displaymath}
            u_{t}(x_{i},t)+\big(\alpha ^{\prime }\ast f(u)\big)(x_{i},t)+\kappa \big((\alpha ^{\prime})^{\prime \prime }\ast u\big)(x_{i},t)=0.
    \end{displaymath}%
     Recalling that $\mathbf{u}(t)=\mathbf{R}u(t)$, this can also be expressed as
     \begin{equation*}
            \mathbf{u}^{\prime }(t)=-\mathbf{R}\Big( (\alpha ^{\prime }\ast f(u))(t)+\kappa ((\alpha ^{\prime})^{\prime \prime }\ast u)(t)\Big).
    \end{equation*}
     A residual term $\mathbf{F}_{h}$ arises from the discretization  of (\ref{eq:cont1}):
     \begin{equation}
                {\frac{d\mathbf{u}}{dt}}=-\bm{\alpha}^{\prime}_{h}\ast f(\mathbf{u})-\kappa D^{2}_{h}\bm{\alpha}^{\prime}_{h}\ast \mathbf{u}+\mathbf{F}_{h},
                \label{eq:u-F}
    \end{equation}
     where  $\mathbf{F}_{h}=\mathbf{F}^{1}_{h}+\mathbf{F}^{2}_{h}$ with
     \begin{displaymath}
               \mathbf{F}^{1}_{h}= \bm{\alpha}^{\prime}_{h}\ast f(\mathbf{u})-\mathbf{R}\big(\alpha ^{\prime }\ast f(u)\big),~~~~~
                \mathbf{F}^{2}_{h}= \kappa D^{2}_{h}\bm{\alpha}^{\prime}_{h}\ast \mathbf{u}-\mathbf{R}\big(\kappa (\alpha ^{\prime})^{\prime \prime }\ast u\big).
     \end{displaymath}
      Suppressing the $t$ variable,  the $i$-th entry of  $\mathbf{F}_{h}$  satisfies
       \begin{equation}
                (F_{h})_{i} =(F_{h}^{1})_{i}+(F_{h}^{2})_{i}   \label{eq:decomp}
        \end{equation}
      with
     \begin{eqnarray}
                (F_{h}^{1})_{i} &=& \sum_{j}h\alpha ^{\prime }(x_{i}-x_{j})f(u(x_{j}))-\int_\mathbb{R} \alpha^{\prime}(x_{i}-y)f(u(y))dy, \nonumber \\
                (F_{h}^{2})_{i} &=& \kappa \sum_{j}hD^{2}_{h}\alpha^{\prime}(x_{i}-x_{j})u(x_{j})
                                        -\kappa \int_\mathbb{R} (\alpha^{\prime})^{\prime \prime }(x_{i}-y)u(y)dy \nonumber.
    \end{eqnarray}
      We first estimate the  term $(F_{h}^{1})_{i}$. By  Lemma \ref{lem:lem2.2} we have
    \begin{displaymath}
       \left\vert (F_{h}^{1})_{i}\right\vert \leq h^{2}\left\Vert r^{\prime\prime}\right\Vert _{L^{1}},
    \end{displaymath}%
    where  $r(y)=\alpha^{\prime}(x_{i}-y)f(u(y))$. Then
    \begin{displaymath}
                r^{\prime\prime}(y)=\frac{d^2~}{dy^{2}}(\alpha^{\prime}(x_{i}-y))f(u(y))
                                        +2\frac{d~}{dy}(\alpha^{\prime}(x_{i}-y))\frac{d~}{dy}f(u(y))
                                                        +\alpha^{\prime}(x_{i}-y)\frac{d^2~}{dy^{2}}f(u(y)).
     \end{displaymath}%
      We have $\Vert f(u)\Vert_{H^{s}}\leq C(M)\Vert u \Vert_{H^{s}}$. For $s>9/2$ we observe that $\Vert u^{(k)}\Vert_{L^{\infty}}\leq C  \Vert u \Vert_{H^{s}}$ for $k\leq 4$. Then  $(f(u))^{\prime}$ and $(f(u))^{\prime\prime}$ are bounded. Since $\alpha^{\prime\prime\prime}=\mu$ is a finite measure, then $r^{\prime\prime}=\widetilde{\mu}$ will be a measure with
     \begin{displaymath}
                        \vert \widetilde{\mu} \vert (\mathbb{R})\leq C \Big(  \vert\mu \vert(\mathbb{R}) +2\Vert \alpha \Vert_{W^{2,1}}  \Big)\Vert u \Vert_{H^{s}}
      \end{displaymath}%
      so that
       \begin{displaymath}
                        \vert (F^{1}_{h})_{i} \vert \leq h^{2}\vert \widetilde{\mu} \vert (\mathbb{R}).
      \end{displaymath}%
      We now estimate the second term  $(F_{h}^{2})_{i}$.  Note that  $\mathbf{F}^{2}_{h}$ can be rewritten in the form
       \begin{displaymath}
             \mathbf{F}^{2}_{h}= \kappa \bm{\alpha}^{\prime}_{h}\ast D^{2}_{h} \mathbf{u}-\kappa \mathbf{R}\big(\alpha ^{\prime}\ast u^{\prime\prime}\big)
                                                =\kappa \bm{\alpha}^{\prime}_{h}\ast \big(D^{2}_{h} \mathbf{u}- \mathbf{R}u^{\prime\prime}\big).
     \end{displaymath}
     So, we have
      \begin{eqnarray*}
                    \vert (F_{h}^{2})_{i}\vert
                     &=&\Big\vert \Big(\kappa\bm{\alpha}^{\prime}_{h}\ast \big( D^{2}_{h}\mathbf{u}- \mathbf{R}u^{\prime\prime}\big)\Big)_{i}\Big\vert \\
                     &\leq &  \kappa   \Vert \bm{\alpha}^{\prime}_{h} \Vert_{l_{h}^{1}}~ \Vert D^{2}_{h}\mathbf{u}- \mathbf{R}u^{\prime\prime} \Vert_{l^{\infty} }
                          \leq   \frac{h^{2}}{12}\kappa \Vert \bm{\alpha}^{\prime}_{h} \Vert_{l_{h}^{1}} \Vert  u^{(4)} \Vert_{L^{\infty}} \leq Ch^{2} \Vert u\Vert_{H^{s}}
        \end{eqnarray*}
        where Lemma \ref{lem:lem2.3} and the Sobolev embedding theorem are used. If we combine the estimates for  $|(F_{h}^{1})_{i}|$ and $|(F_{h}^{2})_{i}|$, we get
        \begin{equation*}
                    \Vert \mathbf{F}_{h}\Vert_{l^{\infty }}\leq Ch^{2}\Vert u\Vert_{H^{s}},
        \end{equation*}
        where $C=C(\alpha ,\tilde{M})$ depends on the bounds on $\alpha $ and $\tilde{M}=\max_{0\leq t\leq T}\Vert u(t)\Vert _{H^{s}}$.    We now let $\mathbf{e}(t)=\mathbf{u}(t)-\mathbf{u}_{h}(t)$  be the error term. Then, from (\ref{eq:disc}) and (\ref{eq:u-F}) we have
        \begin{displaymath}
                {\frac{d\mathbf{e}}{dt}}=-\bm{\alpha }_{h}^{\prime }\ast (f(\mathbf{u})-f(\mathbf{u}_{h}))
                                                                -\kappa D^{2}_{h}\bm{\alpha }_{h}^{\prime}\ast (\mathbf{u}-\mathbf{u}_{h})+\mathbf{F}_{h}, ~~~~\mathbf{e}(0)=\mathbf{0}
        \end{displaymath}%
        or equivalently
         \begin{displaymath}
             \mathbf{e}(t)
                    = \int_{0}^{t}\Big( -\bm{\alpha }_{h}^{\prime }\ast (f(\mathbf{u})-f(\mathbf{u}_{h}))
                                -\kappa D^{2}_{h}\bm{\alpha }_{h}^{\prime}\ast (\mathbf{u}-\mathbf{u}_{h})+\mathbf{F}_{h}\Big) d\tau.
         \end{displaymath}
         Since $f$ is locally Lipschitz we have $\Vert f(\mathbf{u})-f(\mathbf{u}_{h})\Vert_{l^{\infty}} \leq C \Vert \mathbf{u}-\mathbf{u}_{h} \Vert_{l^{\infty}} $. So
           \begin{equation}
            \Vert \mathbf{e}(t)  \Vert_{l^{\infty}}
                    \leq Ch^{2}T+\big(C\Vert \bm{\alpha }_{h}^{\prime }\Vert_{l_{h}^{1}}+\kappa \Vert D^{2}_{h}\bm{\alpha }_{h}^{\prime} \Vert_{l_{h}^{1}}\big)  \int_{0}^{t}       \Vert \mathbf{e}(\tau)  \Vert_{l^{\infty}}  d\tau.  \label{eq:eqe}
         \end{equation}
        Then, by Lemma \ref{lem:lem3.1} and Gronwall's inequality, we get
        \begin{displaymath}
        \Vert \mathbf{e}(t)\Vert _{l^{\infty }}            \leq C h^{2} T             e^{C T}.
    \end{displaymath}
   This implies that, if $h$ is sufficiently small, we have   $\Vert \mathbf{e}(t_{h})\Vert _{l^{\infty }}<M$. Consequently we have $\Vert \mathbf{u}_{h}(t_{h})\Vert _{l^{\infty }}<2M$ showing that $t_{h}=T_{h}=T$. The above estimate yields  (\ref{eq:fourone}) or more explicitly $\Vert \mathbf{u}(t)-\mathbf{u}_{h}(t)\Vert_{l^{\infty}}\leq C(\alpha, u, f, T)h^{2}$. Obviously $C$ depends on  $\Vert \alpha\Vert_{W^{2,1}}$, $|\mu| (\mathbb{R})$, the solution $u$, the nonlinear term $f(u)$ and the existence time $T$.
\end{proof}

\setcounter{equation}{0}
\section{The Truncated Problem and a Decay Estimate}\label{sec:sec4}

In this section we introduce a finite dimensional approximation of the  semi-discrete problem and give  a suitable decay estimate on the tails of the solutions. We shall not review the detailed proof of the error estimate and the decay estimate because the basic ideas and proofs are given  fully  in \cite{Erbay2018, Erbay2021}.

\subsection{The Truncated Problem}
From now on we will assume that the infinite convolution sum defined by  (\ref{eq:disc-con}) was truncated at a finite $N$. Also we  will truncate the infinite system (\ref{eq:disc}) to the system of $2N+1$ equations to
obtain the finite-dimensional system
\begin{equation}
        \frac{dv_{i}^{N}}{dt}
                =-\sum_{j=-N}^{N}h\alpha ^{\prime}(x_{i}-x_{j})f(v_{j}^{N})
                    -\kappa \sum_{j=-N}^{N}hD^{2}_{h}\alpha ^{\prime}(x_{i}-x_{j})v_{j}^{N},\text{\ \ \ \ \ \ }-N\leq i\leq N  \label{eq:trunca}
\end{equation}%
where $v_{i}^{N}$ are the components of a vector valued function $\mathbf{v}^{N}(t)$ with finite dimension $2N+1$. We rewrite (\ref{eq:trunca}) as
\begin{displaymath}
    \frac{d\mathbf{v}^{N}}{dt}=A^{N}f(\mathbf{v}^{N})+B^{N}\mathbf{v}^{N},
\end{displaymath}
where $A^{N}$ and $B^{N}$ are the $(2N+1)\times (2N+1)$ matrices with the entries $a^{N}_{ij}=-h\alpha^{\prime} (x_{i}-x_{j})$   and $b^{N}_{ij}=-h\kappa D^{2}_{h}\alpha^{\prime} (x_{i}-x_{j})$, respectively. On the infinite interval, the only boundary condition is boundedness at infinity. It should be noted that our formulation on the finite range does not include any boundary terms.  By Lemma \ref{lem:lem3.1} we have
\begin{displaymath}
        \Vert A^{N}\mathbf{w}\Vert_{l^{\infty}}\leq \Vert \bm{\alpha }_{h}^{\prime }\Vert_{l^{1}_{h}}\Vert \mathbf{w}\Vert_{l^{\infty}}, ~~\text{and}~~
        \Vert B^{N}\mathbf{w}\Vert_{l^{\infty}}\leq  2|\mu| (\mathbb{R})\Vert \mathbf{w}\Vert_{l^{\infty}},
\end{displaymath}
respectively, where we use the norm $\displaystyle \Vert \mathbf{w}\Vert_{l^{\infty}}=\max_{-N \leq i \leq N} \left \vert w_{i} \right \vert$ for vectors in $\mathbb{R}^{2N+1}$. Since we assume that  $f$ is a locally Lipschitz and smooth function,  the initial-value problem defined for  (\ref{eq:trunca}) (as an ODE system) has a solution on $[0, T^{N})$. Also the blow-up condition
\begin{equation}
        \limsup_{t\rightarrow (T^{N})^{-}}\Vert \mathbf{v}^{N}(t)\Vert _{l^{\infty}}=\infty  \label{eq:blow3}
\end{equation}%
of the truncated problem is compatible with (\ref{eq:blow2a}) in the infinite discrete problem.

We will now estimate the localization error resulting from considering (\ref{eq:trunca}) instead of (\ref{eq:disc}). As the proofs are very similar to the ones in \cite{Erbay2018, Erbay2021}, we will only state the results. Consider  the projection  of the solution $\mathbf{v}$ of the semi-discrete problem associated with (\ref{eq:disc}) onto $\mathbb{R}^{2N+1} $. Let $\mathcal{T}^{N}:l^{\infty }\rightarrow \mathbb{R}^{2N+1}$ be the  truncation operator defined by $\mathcal{T}^{N}\mathbf{v}=(v_{-N},v_{-N+1},\ldots ,v_{0},\ldots ,v_{N-1},v_{N})$.  The following theorem estimates
 the localization error defined as $\mathcal{T}^{N}\mathbf{v}-\mathbf{v}^{N}$.
\begin{theorem}\label{theo:theo4.1}
        Let $\mathbf{v}\in C^{1}\left([0,T],l^{\infty}\right)$  be the solution of (\ref{eq:disc}) with initial value $\mathbf{v}(0)$ and let
         \begin{displaymath}
                \delta=\sup \big\{ \left\vert v_{i}(t)\right\vert : t\in \left[ 0,T\right] , \left\vert i\right\vert >N\big\} ~~\mbox{and}~~
                \epsilon (\delta )=\max_{\left\vert z\right\vert \leq \delta } \left\vert f(z)\right\vert .
        \end{displaymath}
        Then for sufficiently small $\epsilon(\delta)$, the solution $\mathbf{v}^{N}$ \ of (\ref{eq:trunca}) with initial  value $\mathbf{v}^{N}(0)={\mathcal T}^{N}\mathbf{v}(0)$ exists for  times $t\in \lbrack 0,T]$ and
        \begin{displaymath}
                \left\Vert \mathcal{T}^{N}\mathbf{v}(t)-\mathbf{v}^{N}(t)\right\Vert _{l^{\infty }}\leq C\epsilon (\delta ),\text{ \ }t\in \lbrack 0,T].
        \end{displaymath}%
\end{theorem}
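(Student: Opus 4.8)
The plan is to reproduce the discretization-error argument of Theorem \ref{theo:theo3.3}, now reading the ``error'' as the truncation defect. I would regard $\mathbf{w}(t)=\mathcal{T}^{N}\mathbf{v}(t)$ and $\mathbf{v}^{N}(t)$ as solutions of two ODE systems on $\mathbb{R}^{2N+1}$ that obey the same right-hand side up to a residual coming from the discarded tail, and then close with Gronwall's inequality and a continuation argument. First I restrict the infinite system (\ref{eq:disc}) to the indices $|i|\le N$. Since $w_{j}=v_{j}$ for $|j|\le N$, splitting each infinite convolution sum into its interior part ($|j|\le N$) and its tail ($|j|>N$) shows that $\mathbf{w}$ solves the truncated system (\ref{eq:trunca}) with the \emph{same} structure as $\mathbf{v}^{N}$, plus an extra forcing $\mathbf{G}_{h}$,
\begin{displaymath}
   (G_{h})_{i}=-\sum_{|j|>N}h\alpha^{\prime}(x_{i}-x_{j})f(v_{j})-\kappa\sum_{|j|>N}hD^{2}_{h}\alpha^{\prime}(x_{i}-x_{j})v_{j},\qquad |i|\le N,
\end{displaymath}
which collects precisely the contributions of the truncated tail.

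Second, I estimate $\mathbf{G}_{h}$ in $l^{\infty}$. For $|j|>N$ one has $|v_{j}|\le\delta$, hence $|f(v_{j})|\le\epsilon(\delta)$, so the nonlinear tail is bounded by $\epsilon(\delta)\sum_{j}h|\alpha^{\prime}(x_{i}-x_{j})|\le\epsilon(\delta)\Vert\bm{\alpha}^{\prime}_{h}\Vert_{l_{h}^{1}}$, while by Lemma \ref{lem:lem3.1} the linear tail is bounded by $\kappa\delta\Vert D^{2}_{h}\bm{\alpha}^{\prime}_{h}\Vert_{l_{h}^{1}}\le 2\kappa|\mu|(\mathbb{R})\delta$. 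Reconciling this with the stated bound $C\epsilon(\delta)$ is the delicate point created by the new ``KdV term'': the linear tail contributes a factor $\delta$ rather than $\epsilon(\delta)$. I would resolve it by observing that for the admissible nonlinearities $f(u)=u+g(u)$ we have $f^{\prime}(0)\neq0$, whence $\delta\le C\epsilon(\delta)$ for $\delta$ small and therefore $\Vert\mathbf{G}_{h}\Vert_{l^{\infty}}\le C\epsilon(\delta)$; alternatively one carries the sharper residual bound $C(\epsilon(\delta)+\delta)$ unchanged through the rest of the argument.

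Third, set $\mathbf{E}=\mathbf{w}-\mathbf{v}^{N}$, so that $\mathbf{E}(0)=\mathbf{0}$. Subtracting the two systems and integrating in time gives an integral equation for $\mathbf{E}$; using the already-recorded operator bounds $\Vert A^{N}\mathbf{z}\Vert_{l^{\infty}}\le\Vert\bm{\alpha}^{\prime}_{h}\Vert_{l_{h}^{1}}\Vert\mathbf{z}\Vert_{l^{\infty}}$ and $\Vert B^{N}\mathbf{z}\Vert_{l^{\infty}}\le2|\mu|(\mathbb{R})\Vert\mathbf{z}\Vert_{l^{\infty}}$, together with a local Lipschitz bound $\Vert f(\mathbf{w})-f(\mathbf{v}^{N})\Vert_{l^{\infty}}\le L\Vert\mathbf{E}\Vert_{l^{\infty}}$, yields $\Vert\mathbf{E}(t)\Vert_{l^{\infty}}\le C\epsilon(\delta)T+C\int_{0}^{t}\Vert\mathbf{E}(\tau)\Vert_{l^{\infty}}d\tau$, and Gronwall gives $\Vert\mathbf{E}(t)\Vert_{l^{\infty}}\le C\epsilon(\delta)Te^{CT}$. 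The main obstacle is that this is only valid where $\mathbf{v}^{N}$ stays bounded, so I must still guarantee existence on all of $[0,T]$: Theorem \ref{theo:theo3.2}-type arguments a priori give only a maximal $T^{N}$ with the blow-up alternative (\ref{eq:blow3}). Here the smallness of $\epsilon(\delta)$ enters through a bootstrap. Since $\mathbf{w}=\mathcal{T}^{N}\mathbf{v}$ is bounded on $[0,T]$ by $M_{0}:=\sup_{[0,T]}\Vert\mathbf{v}\Vert_{l^{\infty}}$, let $t^{\ast}\le\min(T,T^{N})$ be the largest time with $\Vert\mathbf{v}^{N}(t)\Vert_{l^{\infty}}\le2M_{0}$; on $[0,t^{\ast}]$ the constant $L$ is uniform, so the Gronwall estimate applies and forces $\Vert\mathbf{E}(t^{\ast})\Vert_{l^{\infty}}\le C\epsilon(\delta)<M_{0}$ once $\epsilon(\delta)$ is small. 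Then $\Vert\mathbf{v}^{N}(t^{\ast})\Vert_{l^{\infty}}<2M_{0}$, which (via (\ref{eq:blow3})) excludes $t^{\ast}<T$ and proves $t^{\ast}=T^{N}=T$, so the estimate holds on $[0,T]$. The only real work is the bookkeeping of this continuity argument, which I would model line-for-line on the proof of Theorem \ref{theo:theo3.3}.
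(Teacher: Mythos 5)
Your overall strategy is exactly the intended one (it is the argument of \cite{Erbay2018,Erbay2021}, to which the paper defers for this proof): restrict the infinite system (\ref{eq:disc}) to $|i|\le N$, observe that $\mathcal{T}^{N}\mathbf{v}$ then solves (\ref{eq:trunca}) up to a residual $\mathbf{G}_{h}$ collecting the discarded tail, bound the residual, and close with Gronwall plus the $2M_{0}$-bootstrap that rules out blow-up through (\ref{eq:blow3}). The mechanics are sound: the splitting of $\mathbf{G}_{h}$, the bounds $\Vert \bm{\alpha}^{\prime}_{h}\Vert_{l_{h}^{1}}<\infty$ (Lemma \ref{lem:lem2.2} applied to $\alpha^{\prime}$), $\Vert D^{2}_{h}\bm{\alpha}^{\prime}_{h}\Vert_{l_{h}^{1}}\le 2|\mu|(\mathbb{R})$ (Lemma \ref{lem:lem3.1}), and the continuation argument modeled on Theorem \ref{theo:theo3.3} are all correct.

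The delicate point you isolated is genuine, and it is precisely where your proposal does not recover the literal statement. The linear tail $-\kappa\sum_{|j|>N}hD^{2}_{h}\alpha^{\prime}(x_{i}-x_{j})v_{j}$ is only $O(\delta)$, and this cannot be improved to $O(\epsilon(\delta))$: for $i$ near $\pm N$ the weights $hD^{2}_{h}\alpha^{\prime}(x_{i}-x_{j})$, $|j|>N$, carry $O(1)$ total mass, so the residual at the boundary rows is genuinely of size $\kappa\delta$. Your first fix assumes $f^{\prime}(0)\neq 0$ (so that $\delta\le C\epsilon(\delta)$ for small $\delta$), but that is an \emph{additional} hypothesis: the theorem assumes only that $f$ is smooth (or locally Lipschitz) with $f(0)=0$, which admits $f(u)=u^{2}/2$ or even $f\equiv 0$. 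In the latter case $\epsilon(\delta)=0$ while the truncated linear flow differs from the restricted infinite flow whenever the solution has a nonzero tail, so a bound $C\epsilon(\delta)$ is simply unattainable for general admissible $f$; the statement appears to have been carried over verbatim from the $\kappa=0$ setting of \cite{Erbay2021}, where the residual is purely nonlinear. Your second fix---carrying $C\left(\epsilon(\delta)+\delta\right)$ through Gronwall---is the correct and honest conclusion of this method; note that the smallness hypothesis must then be placed on $\delta$ rather than on $\epsilon(\delta)$, since smallness of $\epsilon(\delta)$ does not control $\delta$. That weaker bound covers every example in the paper (where $f(u)=u+g(u)$, hence $f^{\prime}(0)=1$, or where the decay estimates of Lemma \ref{lem:lem4.4} make $\delta$ itself small) and is all that Theorem \ref{theo:theo4.2} requires. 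So: right approach and correct execution, but you should commit explicitly to one of the two repairs---add the hypothesis $f^{\prime}(0)\neq 0$, or weaken the conclusion to $C\left(\epsilon(\delta)+\delta\right)$---because as written the theorem cannot be proved, by this or any other method, for all $f$ allowed by its hypotheses.
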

 The next theorem shows that, for sufficiently large $N$, $\mathbf{v}^{N}$ approximates  the solution $u$ of the continuous problem (\ref{eq:cont1})-(\ref{eq:initial}).
\begin{theorem}\label{theo:theo4.2}
      Suppose that $\alpha$ satisfies Conditions C1 and C2.   Let $s>9/2$, $f\in C^{[ s] +1}(\mathbb{R})$ with $f(0)=0$.    Let $u\in C^{1}\left([0,T],H^{s}(\mathbb{R})\right)$  be the solution  of the initial-value problem (\ref{eq:cont1})-(\ref{eq:initial}) with $\varphi \in H^{s}(\mathbb{R})$.  Then for sufficiently small $h$ and  $\epsilon >0$, there is an $N$ so that  the solution $\mathbf{u}_{h}^{N}$ \ of (\ref{eq:trunca}) with initial  values $\mathbf{u}_{h}^{N}(0)={\mathcal T}^{N}\bm{\varphi}_{h}$, exists for  times $t\in \lbrack 0,T]$ and
        \begin{equation}
                \Big\vert u(ih,t) -\left(\mathbf{u}_{h}^{N}\right)_{i}(t)\Big\vert
                        = {\mathcal O}\left(h^{2}+\epsilon\right),          \text{ \ }  t\in \lbrack 0,T] \label{eq:fivetwo}
        \end{equation}%
        for all $-N\leq i\leq N$.
\end{theorem}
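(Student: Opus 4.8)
The plan is to split the total error into a discretization part and a localization part by inserting the infinite semi-discrete solution $\mathbf{u}_{h}$ of Theorem~\ref{theo:theo3.3} as an intermediate object. Writing $\mathbf{u}(t)=\mathbf{R}u(t)$, the triangle inequality gives, for $-N\le i\le N$,
\begin{equation*}
\big| u(ih,t)-(\mathbf{u}_{h}^{N})_{i}(t)\big|
\le \big| u(ih,t)-(\mathbf{u}_{h})_{i}(t)\big|
 + \big| (\mathbf{u}_{h})_{i}(t)-(\mathbf{u}_{h}^{N})_{i}(t)\big|.
\end{equation*}
The first term is controlled directly by Theorem~\ref{theo:theo3.3}: for $h\le h_{0}$ the solution $\mathbf{u}_{h}$ of the infinite system (\ref{eq:disc}) with data $\bm{\varphi}_{h}$ exists on $[0,T]$ and $\Vert \mathbf{u}(t)-\mathbf{u}_{h}(t)\Vert_{l^{\infty}}=\mathcal{O}(h^{2})$, which bounds the first term by $Ch^{2}$ uniformly in $i$ and $t$. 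For the second term I would invoke Theorem~\ref{theo:theo4.1} with the choice $\mathbf{v}=\mathbf{u}_{h}$. Since the truncated data is $\mathcal{T}^{N}\bm{\varphi}_{h}=\mathcal{T}^{N}\mathbf{u}_{h}(0)$, the solution $\mathbf{u}_{h}^{N}$ of (\ref{eq:trunca}) is, by uniqueness, exactly the $\mathbf{v}^{N}$ produced by that theorem, so $\Vert \mathcal{T}^{N}\mathbf{u}_{h}(t)-\mathbf{u}_{h}^{N}(t)\Vert_{l^{\infty}}\le C\epsilon(\delta)$, where $\delta=\sup\{|(\mathbf{u}_{h})_{i}(t)|:t\in[0,T],\,|i|>N\}$ and $\epsilon(\delta)=\max_{|z|\le\delta}|f(z)|$.

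The crux is to show that $\delta$, and hence $\epsilon(\delta)$, can be made as small as desired by enlarging $N$, so that the localization contribution is $\mathcal{O}(\epsilon)$. For $|i|>N$ the uniform discretization bound gives $|(\mathbf{u}_{h})_{i}(t)|\le |u(ih,t)|+Ch^{2}\le \sup_{|x|>Nh,\,t\in[0,T]}|u(x,t)|+Ch^{2}$. Because $s>9/2>1/2$ we have the embedding $H^{s}(\mathbb{R})\hookrightarrow C_{0}(\mathbb{R})$, and since $u\in C([0,T],H^{s})$ the orbit $\{u(t)\}_{t\in[0,T]}$ is compact in $H^{s}$; hence the tails of $u$ decay uniformly in $t$, i.e. $\sup_{|x|>R,\,t}|u(x,t)|\to 0$ as $R\to\infty$. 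Taking $h$ small and $N$ large with $Nh$ large therefore forces $\delta\to 0$, and since $f$ is continuous with $f(0)=0$ we get $\epsilon(\delta)\to 0$. In particular $N$ may be chosen so that $\epsilon(\delta)$ is small enough both to satisfy the smallness hypothesis of Theorem~\ref{theo:theo4.1} (guaranteeing existence of $\mathbf{u}_{h}^{N}$ on all of $[0,T]$) and to make $C\epsilon(\delta)\le C\epsilon$. Combining the two bounds gives $|u(ih,t)-(\mathbf{u}_{h}^{N})_{i}(t)|\le Ch^{2}+C\epsilon=\mathcal{O}(h^{2}+\epsilon)$ for all $-N\le i\le N$ and $t\in[0,T]$.

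I expect the main obstacle to be the uniform tail estimate for the \emph{discrete} solution $\mathbf{u}_{h}$, since $\delta$ is defined through $\mathbf{u}_{h}$ rather than through the continuous $u$. The key is that Theorem~\ref{theo:theo3.3} transfers the spatial decay of $u$ to $\mathbf{u}_{h}$ with only an $\mathcal{O}(h^{2})$ error, and that the decay of $u(t)$ at infinity is uniform in $t$ thanks to compactness of the solution orbit in $H^{s}$. Once this is in place, the rest is a routine triangle-inequality assembly of the two previously established estimates. A minor point to keep straight is the order of quantifiers: one first fixes $h\le h_{0}$ small, controlling the $Ch^{2}$ terms, and only then chooses $N$ large relative to $h$ so that $Nh$ is large and $\epsilon(\delta)$ meets the threshold required by Theorem~\ref{theo:theo4.1}.
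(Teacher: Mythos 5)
Your proposal is correct and takes essentially the same route as the paper, which skips the proof and refers to \cite{Erbay2018, Erbay2021}: insert the infinite semi-discrete solution $\mathbf{u}_{h}$ as intermediate object, bound the discretization part by Theorem~\ref{theo:theo3.3} and the localization part by Theorem~\ref{theo:theo4.1}, using the uniform-in-$t$ spatial decay of $u$ to control $\delta$ and hence $\epsilon(\delta)$. The only cosmetic imprecision is that for fixed $h$ one gets $\delta \leq \sup_{|x|>Nh,\,t}|u(x,t)| + Ch^{2}$ rather than $\delta \to 0$ as $N\to\infty$, so $C\epsilon(\delta)\leq C\epsilon + Ch^{2}$ rather than $C\epsilon$; this extra term folds harmlessly into the $\mathcal{O}(h^{2}+\epsilon)$ bound.
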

Since the proofs are very similar to those  in  \cite{Erbay2018, Erbay2021}, we skip the proofs of the above  theorems.

\subsection{A Decay Estimate}

We now ask whether there are values of $N$ for which the localization error $\epsilon$ is kept at a certain level. In Proposition 5.3 of \cite{Erbay2018} it has  been proven that such an $N$ exists, under the condition that the solution goes to zero at infinity. We note that, for a given level of $\epsilon>0$, the equation
\begin{displaymath}
        \epsilon=\max\big\{ |f(u(x,t))|: |x|\geq Nh, ~~t\in [0, T]\big\}
\end{displaymath}
provides  an implicit description of the solution set for $N$. A more explicit relation between $N$ and $\epsilon$ can be given if there are some decay estimates for the solution to the initial-value problem. The following lemma  provides a general decay estimate for the solutions corresponding to certain kernel functions.
\begin{lemma}\label{lem:lem4.4}
    Let $\omega\left(x\right) $ be a positive function such that $\left(\left\vert \alpha ^{(j)}\right\vert \ast \omega\right)(x) \leq C\omega(x) $ for all $x\in \mathbb{R}$, and $j=1, 3$. Suppose that $\varphi \omega^{-1}\in L^{\infty }\left(\mathbb{R}\right) $. The solution $u\in C^{1}\left([0,T], H^{s}(\mathbb{R})\right)$ of (\ref{eq:cont1})-(\ref{eq:initial}) then satisfies the estimate
    \begin{equation}
        \left\vert u(x,t)\right\vert \leq C\omega\left( x\right)
    \end{equation}
    for all  $x\in \mathbb{R}$,  $t\in \left[ 0,T\right] $.
\end{lemma}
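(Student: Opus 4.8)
The plan is to run a Gronwall argument on the weighted sup-norm of the solution, using the Duhamel (integral) formulation of (\ref{eq:cont1})-(\ref{eq:initial}). First I would write the solution in integral form,
$$u(x,t) = \varphi(x) - \int_0^t \Big( (\alpha' \ast f(u))(x,\tau) + \kappa(\alpha''' \ast u)(x,\tau)\Big)\,d\tau,$$
and set $M(t) = \sup_{x\in\mathbb{R}} |u(x,t)|/\omega(x)$, the norm of $u(t)$ in the weighted space $X_\omega = \{g: g\omega^{-1} \in L^\infty\}$. The hypothesis $\varphi\omega^{-1}\in L^\infty$ says precisely that $M(0) < \infty$.

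Second, I would bound the two convolution terms pointwise against $\omega$. For the linear term, writing $(\alpha'''\ast u)(x) = \int u(x-z)\,d\mu(z)$ and using $|u(x-z,\tau)| \le M(\tau)\omega(x-z)$,
$$|(\alpha'''\ast u)(x,\tau)| \le M(\tau)\,(|\mu|\ast\omega)(x) \le C\,M(\tau)\,\omega(x),$$
by the hypothesis with $j=3$ (reading $|\alpha^{(3)}|$ as the total-variation measure $|\mu|$). For the nonlinear term, since $f(0)=0$ and the $H^s$-solution obeys the a priori bound $M_\infty = \max_{[0,T]}\|u(t)\|_{L^\infty}$, $f$ is Lipschitz on $[-M_\infty,M_\infty]$, so $|f(u(y,\tau))| \le L|u(y,\tau)| \le LM(\tau)\omega(y)$ and hence
$$|(\alpha'\ast f(u))(x,\tau)| \le L\,M(\tau)\,(|\alpha'|\ast\omega)(x) \le C L\,M(\tau)\,\omega(x),$$
by the hypothesis with $j=1$. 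Dividing the Duhamel identity by $\omega(x)$, taking the supremum over $x$, and inserting these bounds yields
$$M(t) \le \|\varphi\omega^{-1}\|_{L^\infty} + C'\int_0^t M(\tau)\,d\tau,$$
with $C' = C(L+\kappa)$; Gronwall's inequality then gives $M(t)\le \|\varphi\omega^{-1}\|_{L^\infty}e^{C't}$ on $[0,T]$, which is the claimed estimate.

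The main obstacle is that this Gronwall loop presupposes $M(t)<\infty$ for $t>0$: a priori the $H^s$-solution only decays like an $H^s$ function, which for a rapidly decaying weight (e.g. $\omega(x)=e^{-a|x|}$) need not be controlled by $\omega$, so $|u(x,t)|/\omega(x)$ could be unbounded before the estimate is established. I would close this gap by observing that the three estimates above show $g\mapsto \alpha'\ast f(g)+\kappa\,\mu\ast g$ to be locally Lipschitz as a map $X_\omega\to X_\omega$ (here, for the decaying weights of interest, $\omega\in L^\infty$ gives the embedding $X_\omega\hookrightarrow L^\infty$, which furnishes a uniform $L^\infty$ bound on $X_\omega$-balls and hence a uniform Lipschitz constant for $f$). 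Picard's theorem in the Banach space $X_\omega$ then produces a solution in $C^1([0,T'],X_\omega)$ for the data $\varphi\in X_\omega$, and the a priori bound above lets it be continued to all of $[0,T]$; uniqueness in $L^\infty$ (where $f$ is locally Lipschitz) identifies this weighted solution with the given $u$, so that $M(t)<\infty$ is legitimate and the Gronwall conclusion applies. Alternatively, a continuity/bootstrap argument on $[0,t]$ starting from $M(0)<\infty$ achieves the same, paralleling the treatment in \cite{Erbay2018}.
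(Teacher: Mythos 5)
Your proof is correct and takes essentially the same route as the proof the paper relies on (it defers to Lemma B.1 in Appendix B of \cite{Erbay2018}, following the idea of \cite{Bona1981}): a Gronwall estimate on the weighted sup-norm $\sup_x |u(x,t)|/\omega(x)$ applied to the integral form of the equation, using the $j=1$ hypothesis for the nonlinear convolution term and the $j=3$ hypothesis (with $|\mu|$ as the total-variation measure) for the linear one. Your explicit handling of the a priori finiteness of the weighted norm---local well-posedness in the weighted space plus uniqueness in $L^{\infty}$---is precisely the technical point that makes the Gronwall loop legitimate, and matches the referenced treatment.
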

The proof of this lemma  follows a similar idea used in \cite{Bona1981}. We skip the proof and refer the reader to \cite{Erbay2018} (Lemma B.1 in Appendix B)  for a similar proof.

Next, as an application of the above lemma, we consider the  kernel functions (\ref{eq:ros-ker}) and  (\ref{eq:ros-bbm-ker}) corresponding to the Rosenau-KdV equation  (\ref{eq:rosenau}) and the Rosenau-BBM-KdV equation (\ref{eq:rosenau-bbm}), respectively,  both of them are members of the class (\ref{eq:cont})
\begin{example}\label{ex:ex4.5}
        From  (\ref{eq:ros-ker}) and (\ref{eq:ros-bbm-ker}) we get the following inequality
        \begin{displaymath}
                \big\vert \alpha^{(j)}(x)\big\vert \leq C_{1}e^{-a\vert x \vert},~~~~j=1,3
        \end{displaymath}
        with $a=1/\sqrt{2}$ for (\ref{eq:ros-ker}) and $a=\sqrt{3}/2$ for (\ref{eq:ros-bbm-ker}).   Now, taking $\omega(x) =e^{-ra \vert x\vert}$ with any $0<r<1$, we find
        \begin{displaymath}
        \left(\big\vert \alpha^{(j)}\big\vert \ast \omega\right)(x)
                        \leq C_{1}\int_{\mathbb{R}} e^{-a \vert x-y\vert}  e^{-ra \vert y\vert} dy
                       \leq C_{2}e^{-ra\vert x\vert}
         \end{displaymath}
         for $j=1,3$.     Thus, by   Lemma \ref{lem:lem4.4} we deduce that, for initial data satisfying $\varphi ( x) e^{ra\vert x \vert}\in
         L^{\infty }\left( \mathbb{R}\right) $, solutions of  (\ref{eq:rosenau}) and   (\ref{eq:rosenau-bbm})  will satisfy
        \begin{equation}
                \left \vert u(x,t)\right \vert \leq Ce^{-ra\vert x\vert}, ~~~~~0<r<1  \label{Ros-solution}
        \end{equation}
        for all $t\in \left[ 0,T\right]$.
\end{example}

\setcounter{equation}{0}
\section{Numerical Experiments}\label{sec:sec5}

In this section we  confirm the theoretical results with numerical experiments. Through the numerical experiments reported below, we consider particular kernel functions satisfying Conditions C1 and C2 described earlier.

Before  proceeding to the sample cases we briefly address several issues regarding the content and purpose of the numerical experiments to be reported in this section. The main results accomplished in the previous sections are: (i) the proposed numerical method converges with  the quadratic rate of convergence and (ii) the cut-off  error  resulting from taking a finite computational  interval is kept at a certain level.  The principal motivation of the experiments reported below is to check the optimality of the error estimates and to illustrate the variation of the cut-off error. We remind that the explicit solutions to the initial-value problem  of the nonlocal equation are available in the literature only in very special cases of the kernel function. This is the only reason why we consider Rosenau-type equations for the experiments in the following  two subsections. That is,  Rosenau-type equations are considered here just to illustrate how the present method works for a familiar example and from a computational point of view there may be more efficient  numerical methods suggested Rosenau-type equations.  In the last subsection we consider a genuinely nonlocal equation with the Gaussian kernel, for which explicit  solutions are not available. All the numerical experiments confirm the optimality of our error estimates.

As in \cite{Erbay2018, Erbay2021},  there will be no stability limitation regarding spatial mesh size. Roughly speaking, a fully discrete version of our numerical scheme will be straightforward. This is the main reason why we prefer to use an ODE solver rather than a fully discrete scheme in all the numerical experiments. The integration of (\ref{eq:trunca}) in time was done using the  Matlab ODE solver \verb"ode45" based on the fourth-order Runge-Kutta method. To  keep temporal errors  much smaller than spatial errors, the relative and absolute tolerances for the solver \verb"ode45" are chosen to be $RelTol=10^{-10}$ and $AbsTol=10^{-10}$, respectively.

\subsection{The Rosenau-KdV Equation}

We begin our numerical experiments by considering the kernel  (\ref{eq:ros-ker}) since    an exact solution of       (\ref{eq:rosenau}) is available and a decay estimate       for (\ref{eq:ros-ker}) has been already proved in Example \ref{ex:ex4.5}. The Rosenau-KdV equation (\ref{eq:rosenau}) with $g(u)=u^{2}/2$ and $\kappa=1$ admits a solitary wave solution of the form
\begin{equation}
        u(x,t)=A~\text{sech}^{4}\big(B(x-ct) \big), \label{eq:solitary}
\end{equation}
with
\begin{equation}
        A=-\frac{35}{24}+\frac{35}{312}\sqrt{313},~~~~B=\frac{1}{24}\sqrt{-26+2\sqrt{313}},~~~~c=\frac{1}{2}+\frac{1}{26}\sqrt{313} \label{eq:parKdV}
\end{equation}
\cite{Wang2019, Zuo2009}.  The solitary wave  (\ref{eq:solitary}) is initially located at $0$ and propagates to the right  with the constant wave speed $c$. To compare the numerical and exact solutions,  we   solve (\ref{eq:trunca}) with the initial data   $u(x,0)=A\, \text{sech}^{4}\big(B x) \big)$
using the Matlab ODE solver \verb"ode45". Here  the computational domain  is chosen to be  $[-40,80]$ while  the grid spacing is chosen as $h=0.5$ for which $N=120$.  The exact and numerical solutions at  $ t= 40$   are shown in Figure \ref{fig:Fig1} and  there is no noticeable difference between the exact and  numerical results. This numerical experiment clearly indicates that our semi-discrete  scheme  is able to capture  the evolution of the solitary wave   on a relatively coarse mesh for relatively long times.
\begin{figure}[h!]
    \centering
    \includegraphics[width=0.80\linewidth,scale=1.50,keepaspectratio]{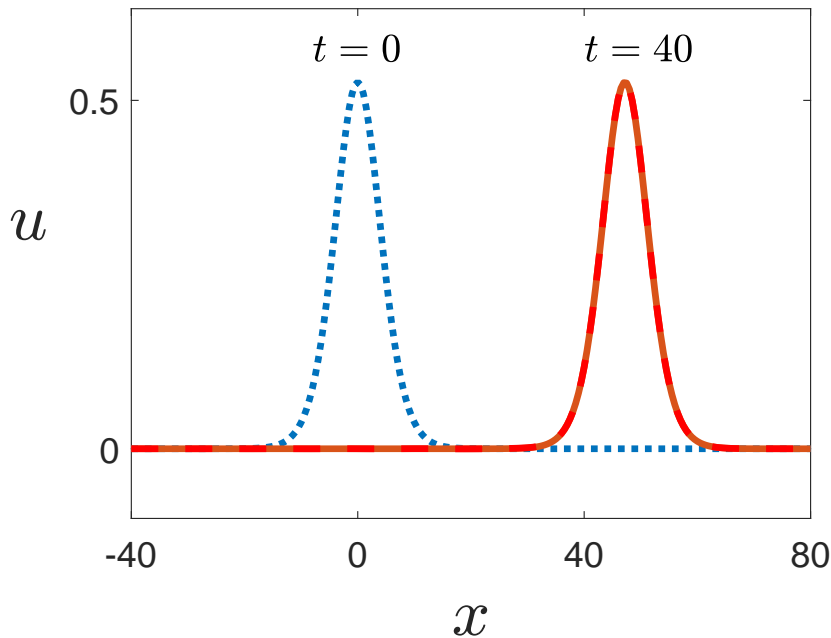}
    \caption{Propagation of a right-moving solitary wave  for  (\ref{eq:cont}) when  the kernel $\alpha$ is given by (\ref{eq:ros-ker}) (that is, the case of the  Rosenau-KdV equation) and $f(u)=u+u^{2}/2$, $\kappa=1$. The initial profile, the exact and the numerical solutions at   $t=40$   are shown with the dotted line, the solid line and the dashed line, respectively. The numerical solution is almost indistinguishable from the exact solution.
    The computational interval, the number of grid points and  the mesh size are $[-40,80]$,  $N=120$  and   $h=0.5$, respectively.}
    \label{fig:Fig1}
\end{figure}

To verify the convergence rate estimate derived in Theorem \ref{theo:theo3.3}  for the spatial discretization error, we now perform numerical experiments  for the initial-value problem defined above.  The computational domain is taken so large  to avoid influence of the localization error brought on by the finite domain size. The  $l^{\infty}$-errors $E_{h}^{N}$ at time $t$  are computed using the formula
\begin{equation}
        E_{h}^{N}(t)=\left\Vert \mathbf{u}(t) - \mathbf{u}_{h}^{N}(t) \right\Vert_{l^{\infty}}
                =\max_{-N\leq i \leq N} \left \vert u(x_{i},t)- (\mathbf{u}_{h}^{N})_{i}\right \vert . \label{eq:linferror}
 \end{equation}
The experimental convergence rate $\rho$ is calculated by the formula
 \begin{equation}
        \rho={{\log\left({{E_{h_{1}}^{N}(t)}/ {E_{h_{2}}^{N}(t)}}\right)}\over {\log\left({h_{1}}/ {h_{2}}\right)}},
 \end{equation}
using the errors at two different values $h_{1}$ and $h_{2}$ of the mesh size.

 In Figure \ref{fig:Fig2}  we present   the errors  measured using (\ref{eq:linferror}) for different mesh sizes, where the mesh size varies from $h =1$ to $h =2^{-5}$ and the computational interval is $[-100,100]$. The errors  calculated at time $t =40$  are plotted versus $h$ on a logarithmic scale (the solid line with circle markers).    For comparison purposes,  the theoretical quadratic convergence in space is indicated by a dashed line. We observe that, when   the kernel is given by (\ref{eq:ros-ker}), the experimental rate of convergence corroborate the quadratic order of convergence proved in Theorem \ref{theo:theo3.3} for the semi-discrete scheme .

\begin{figure}[h!]
    \centering
    \includegraphics[width=0.80\linewidth,scale=1.50,keepaspectratio]{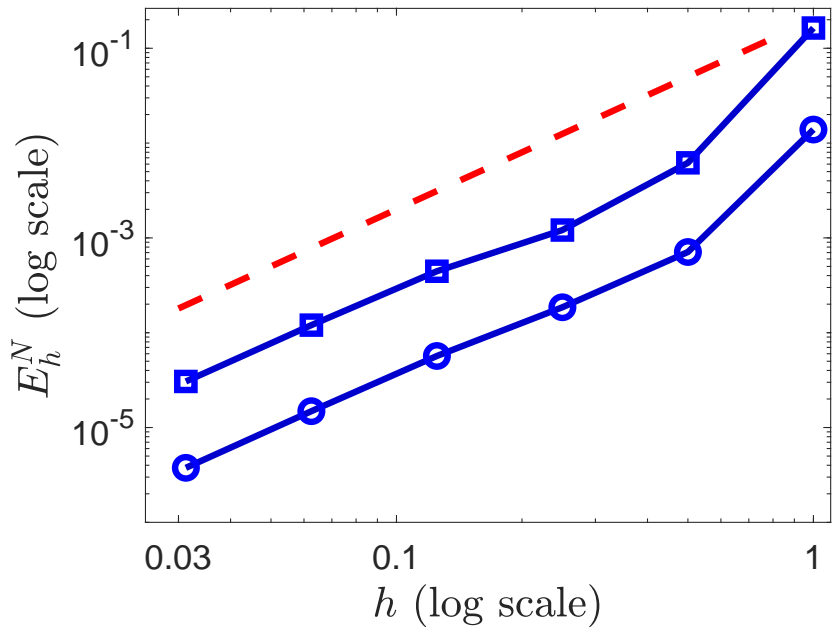}
    \caption{Variation of the error at   $t=40$ with the mesh size $h$.   The theoretical quadratic convergence is plotted as a dashed line with slope 2:1 for reference. The circle and square markers indicate the data points of the numerical experiments for the  problem of finding a solitary wave solution  of  the nonlocal nonlinear wave equation (\ref{eq:cont}).   The circle markers are for the case where the kernel $\alpha$ is given by (\ref{eq:ros-ker}) (that is, the case of the  Rosenau-KdV equation).  The square markers are for the case where the kernel $\alpha$ is given by (\ref{eq:ros-bbm-ker}) (that is, the case of the  Rosenau-BBM-KdV equation).     For both cases,  $f(u)=u+u^{2}/2$ and $\kappa=1$.  }
    \label{fig:Fig2}
\end{figure}

In the above experiments, the numerical results are obtained by solving the $2N+1$ equations of (\ref{eq:trunca}), that were obtained by truncating  the infinite equations system  (\ref{eq:disc}).  To investigate how this truncation  affects the numerical results we conduct another set of the numerical experiments. The mesh size is  fixed  at $h =0.05$ but the number of grid points is steadily increased.  Consequently ,   the size of the computational domain will not be the same in all experiments.  Figure \ref{fig:Fig3}  shows,  on a semi-logarithmic scale, the variation of the error at time $t=40$ with $N$ for  sufficiently large numbers of grid points  (the  line with circle markers).  One can clearly see that, as $N$ increases, the error first decreases and then stagnates.  In other words, up to a certain value of $N$ ($ \approx 10^3$), the localization error dominates   the total error in the theoretical error estimate $E_{h}^{N}={\mathcal O}\left(h^{2}+\epsilon\right)$ and then  the spatial discretization error dominates. This is the expected outcome and is in line with the decay estimate in Example \ref{ex:ex4.5}. Note that that the solitary wave  solution in (\ref{eq:solitary}) decays exponentially to zero for $|x| \rightarrow \infty$ and  $\epsilon={\mathcal O}\left(e^{-CNh}\right)$  by Example \ref{ex:ex4.5}. The numerical experiments in  Figure \ref{fig:Fig3}  do confirm that the level of the localization error can be controlled if a large enough $N$ is used.

\begin{figure}[h!]
    \centering
    \includegraphics[width=0.80\linewidth,scale=1.50,keepaspectratio]{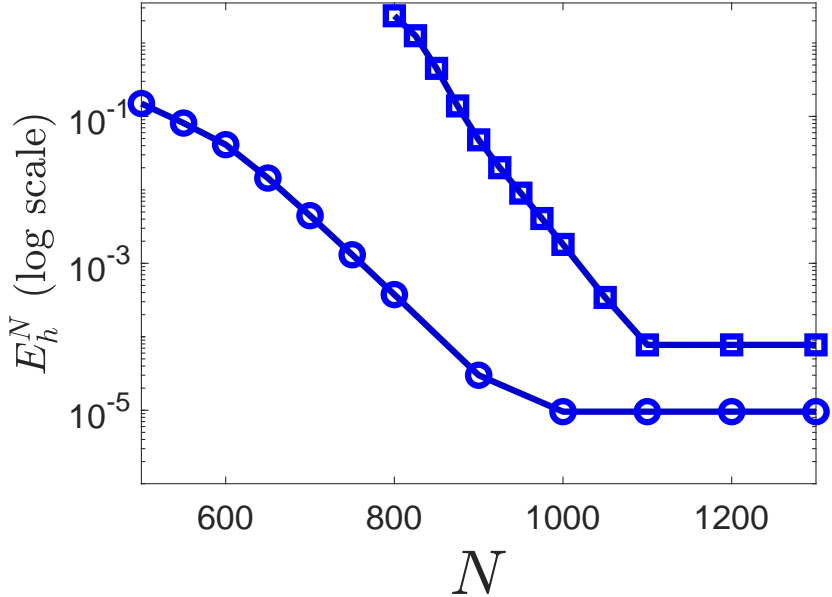}
    \caption{Variation of  the $l^{\infty}$-error ($E_{h}^{N}$) at   $t=40$  with $N$ for the solitary wave problem  of the nonlocal nonlinear wave equation (\ref{eq:cont}). The circle and square markers indicate the data points of the numerical experiments.  The circle markers are for the case where the kernel $\alpha$ is given by (\ref{eq:ros-ker}) (that is, the case of the  Rosenau-KdV equation).  The square markers are for the case where the kernel $\alpha$ is given by (\ref{eq:ros-bbm-ker}) (that is, the case of the  Rosenau-BBM-KdV equation).     For both cases,  $f(u)=u+u^{2}/2$ and $\kappa=1$.      The  mesh size  is fixed at $h=0.05$. The computational domain is $[-Nh,  Nh]$. }
    \label{fig:Fig3}
\end{figure}

\subsection{The Rosenau-BBM-KdV Equation}

This subsection will provide further numerical experiments to  address the performance of the  semi-discrete scheme for the kernel $\alpha$ given by (\ref{eq:ros-bbm-ker}).   When $g(u)=u^{2}/2$ and $\kappa=1$, an exact solution of (\ref{eq:rosenau-bbm})  was given in \cite{ Wongsaijai2014} in the form   (\ref{eq:solitary})   with
\begin{equation}
        A=\frac{5}{456}(-25+13\sqrt{457}),~~~~B=\frac{1}{\sqrt{288}}\sqrt{-13+\sqrt{457}},~~~~c=\frac{241+13\sqrt{457}}{266}. \label{eq:parBBM}
\end{equation}
Using the same approach as in the previous subsection, we now estimate  the convergence rate of the semi-discrete scheme applied for (\ref{eq:ros-bbm-ker})  and discuss the decay of the localization errors.

Consider the initial-value problem defined by  (\ref{eq:trunca}) with the kernel (\ref{eq:ros-bbm-ker}) and the initial data   $u(x,0)=A\, \text{sech}^{4}\big(B x) \big)$ where $A$ and $B$ are given by (\ref{eq:parBBM}). Again, we use the Matlab ODE solver \verb"ode45" to solve the truncated problem.    In the first set of the numerical experiments,  the computational interval $[-80, 120]$ is large enough to ensure that the  localization errors are negligible compared to the discretization errors. The mesh size $h$ changes from  $h =1$ to $h =2^{-5}$.   In Figure \ref{fig:Fig2}  we plot,   on a logarithmic scale, the spatial discretization errors  (the solid line with square markers)    measured at time $t =40$   using (\ref{eq:linferror}). Figure \ref{fig:Fig2} clearly shows that the convergence rate obtained  from the numerical experiments is  in excellent agreement with the theoretical quadratic convergence rate of Theorem \ref{theo:theo3.3}, indicated by a dashed line. As can be seen from  Figure \ref{fig:Fig2}, for a fixed mesh size,  the errors corresponding to the Rosenau-BBM-KdV equation  are slightly higher than those corresponding to the Rosenau-KdV equation. This is due to that the value of the amplitude parameter $A$ in the present problem is approximately five times grater than the value of $A$  in the previous subsection. In other words, the present problem is more nonlinear than the previous one.

In the second set of the numerical experiments, we fix  the grid spacing $h=0.05$, and vary the number of grid points $2N+1$.   The errors at time $t=40$ are displayed in  Figure \ref{fig:Fig3},  on a semi-logarithmic scale (the  line with square markers).  In the figure we observe the behavior seen for the solitary wave problem of the Rosenau-KdV equation and displayed again in Figure \ref{fig:Fig3}.  It shows that,  for relatively small values of $N$, the localization error  contributes more to the total error  but it disappears as $N$ increases above some critical value ($\approx 1100$).  This is due to the exponentially decaying nature of the solitary wave solution.   So, in the case of (\ref{eq:ros-bbm-ker}) too, we get the conclusion that  the localization error  can be  made negligible relative to the discretization error  if $N$ is large enough.

\begin{figure}[h!]
    \centering
    \includegraphics[width=0.80\linewidth,scale=1.50,keepaspectratio]{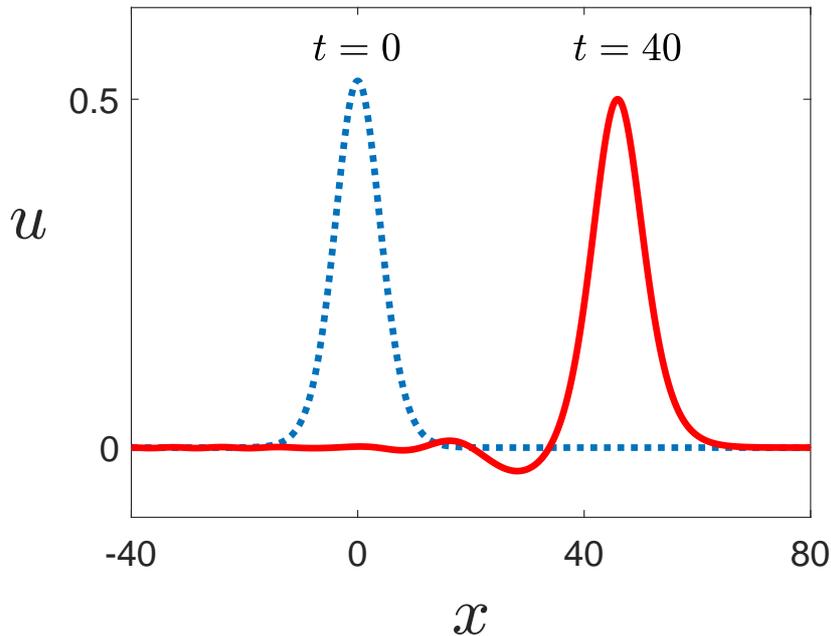}
    \caption{Propagation of a right-moving solitary wave  for  (\ref{eq:cont}) with $\alpha(x)= \frac{1}{\sqrt{2\pi}}e^{-\frac{x^2}{2}}$, $f(u)=u+u^{2}/2$ and $\kappa=1$. The initial profile and the numerical solutions at   $t=40$   are shown with the dotted line and the solid line, respectively.     The computational interval, the number of grid points and  the mesh size are $[-40,80]$,  $N=1200$  and   $h=0.05$, respectively.  }
    \label{fig:Fig4}
\end{figure}

\subsection{Gaussian Kernel}

In this subsection we will report about convergence rate  in the case when the kernel function is not  the Green's function of a differential operator. We stress that the kernels considered in the previous two subsections  are the opposite of the present situation. Let the kernel $\alpha$ be  the Gaussian kernel
\begin{equation}
    \alpha(x)= {1\over {\sqrt{2\pi}}}e^{-{x^2\over  2}}, \label{eq:gauss}
\end{equation}
which is a commonly used  and  infinitely smooth kernel. The following experiments are performed using (\ref{eq:gauss}).

As in the previous two subsections, we solve (\ref{eq:trunca}) with   the initial data   $u(x,0)=A\, \text{sech}^{4}(B x)$ for the time interval  $[0, 40]$. We assume that the the constants $A$ and $B$ are given by (\ref{eq:parKdV}).  We first note that the decay  estimate  (\ref{Ros-solution}) derived for solutions of the Rosenau-KdV equation  and the Rosenau-BBM-KdV equation using Lemma \ref{lem:lem4.4} is also valid for the solution of the present problem.   The reasons for this situation are twofold. On the one hand, we accept the same initial condition as before.  On the other hand,   the function $e^{-a\vert x\vert }$ in Example  \ref{ex:ex4.5}   dominates the Gaussian kernel and its derivatives.

The initial profile and the numerical solution at $t=40$ obtained for  the computational interval $[-40,80]$ and the mesh size $h=0.05$    are  illustrated in Figure \ref{fig:Fig4}.   The figure shows oscillatory wavetrain due to unbalanced  dispersive regularization. That is, the solution profile is  different from those in the previous two subsections. The reason is that the kernel functions and consequently  the dispersive effects are taken differently. So, in the present experiment, we cannot expect to observe a solitary wave  which is generated by the balance between the nonlinear and dispersive effects.

\begin{table}[tbhp]
    \caption{ Refinement levels and corresponding experimental  orders of convergence  for the single solitary wave problem of   (\ref{eq:cont}) with      $\alpha(x)= \frac{1}{\sqrt{2\pi}}e^{-\frac{x^2}{2}}$, $f(u)=u+u^{2}/2$ and $\kappa=1$.}
    \label{tab:tab1}
    \vspace*{10pt}
   \centering
   \begin{tabular}{|c|c|c|} \hline
   $h$                    &  $N$                  &  Order of convergence  ($\rho$)         \\ \hline
   $1$                         &  $120 $                &  -   \\
   $2^{-1}$               &  $240  $                &  -    \\
  $ 2^{-2}$               &  $480  $              &     $2.01427444$     \\
  $ 2^{-3}$               &  $960 $               &     $2.00294376$     \\
   $2^{-4} $              &  $1920 $              &    $ 2.00084024$        \\
   $2^{-5}$               &  $3840  $             &  $2.00019716$        \\
   \hline
   \end{tabular}
\end{table}

To investigate  how fast the discretization error vanishes as $h$  decrease, we perform numerical experiments for various values of $h$. Since no exact solution is known for the initial-value problem, we compute the experimental order of convergence by the formula
 \begin{equation}
        \rho=\frac{1}{\log 2}\log\Big(\frac{\Vert \mathbf{u}_{h}^{N}(t) - \mathbf{u}_{h/2}^{N}(t)\Vert_{l^{\infty}}}{\Vert \mathbf{u}_{h/2}^{N}(t)- \mathbf{u}_{h/4}^{N}(t)\Vert_{l^{\infty}}}\Big)
 \end{equation}
using the approximate solutions obtained  for three successive values $h$, $h/2$ and $h/4$ of the mesh size.  The mesh size  varies from  $h =1$ to $h =2^{-5}$ for the computational interval $[-110, 130]$ and the numerical solutions corresponding to  $t=40$ are used to calculate the experimental order of convergence.  From the results in Table \ref{tab:tab1} we see that the experimental rate of convergence is perfectly consistent with the   quadratic convergence estimate established in Theorem  \ref{theo:theo3.3}.    That is, the convergence rate observed here is the same as the convergence rates observed in the previous two subsections even though the kernel functions considered  have completely different characters.

\bibliographystyle{plainnat}
\bibliography{Erbay-arXiv-02-02-2022}

\begin{thebibliography}{10}
\providecommand{\natexlab}[1]{#1}
\providecommand{\url}[1]{\texttt{#1}}
\expandafter\ifx\csname urlstyle\endcsname\relax
  \providecommand{\doi}[1]{doi: #1}\else
  \providecommand{\doi}{doi: \begingroup \urlstyle{rm}\Url}\fi

\bibitem[Benjamin et~al.(1972)Benjamin, Bona, and Mahony]{Benjamin1972}
T.~B. Benjamin, J.~L. Bona, and J.~J. Mahony.
\newblock Model equations for long waves in nonlinear dispersive systems.
\newblock \emph{Philos. Trans. R. Soc. Lond. Ser. A: Math. Phys. Sci.},
  272:\penalty0 47--78, 1972.

\bibitem[Bona et~al.(1981)Bona, Pritchard, and Scott]{Bona1981}
J.~L. Bona, W.~G. Pritchard, and L.~R. Scott.
\newblock {A}n evaluation of a model equation for water waves.
\newblock \emph{Philos. Trans. R. Soc. Lond. Ser. A: Math. Phys. Sci.},
  302:\penalty0 457--510, 1981.

\bibitem[Duruk et~al.(2010)Duruk, Erbay, and Erkip]{Duruk2010}
N.~Duruk, H.~A. Erbay, and A.~Erkip.
\newblock Global existence and blow-up for a class of nonlocal nonlinear
  {C}auchy problems arising in elasticity.
\newblock \emph{Nonlinearity}, 23:\penalty0 107--118, 2010.

\bibitem[Erbay et~al.(2018)Erbay, Erbay, and Erkip]{Erbay2018}
H.~A. Erbay, S.~Erbay, and A.~Erkip.
\newblock Convergence of a semi-discrete numerical method for a class of
  nonlocal nonlinear wave equations.
\newblock \emph{ESAIM Math. Model. Numer. Anal.}, 52:\penalty0 803--826, 2018.

\bibitem[Erbay et~al.(2021)Erbay, Erbay, and Erkip]{Erbay2021}
H.~A. Erbay, S.~Erbay, and A.~Erkip.
\newblock A semi-discrete numerical method for convolution-type unidirectional
  wave equations.
\newblock \emph{J. Comput. Appl. Math.}, 387:\penalty0 112496, 2021.

\bibitem[Korteweg and de~Vries(1895)]{Korteweg1895}
D.~J. Korteweg and G.~de~Vries.
\newblock On the change of form of long waves advancing in a rectangular
  channel, and on a new type of long stationary waves.
\newblock \emph{Philos. Mag.}, 39:\penalty0 422--443, 1895.

\bibitem[Rosenau(1988)]{Rosenau1988}
P.~Rosenau.
\newblock {D}ynamics of dense discrete systems: {H}igh order effects.
\newblock \emph{Prog. Theor. Phys.}, 79:\penalty0 1028--1042, 1988.

\bibitem[Wang and Dai(2019)]{Wang2019}
X.~Wang and W.~Dai.
\newblock A conservative fourth-order stable finite difference scheme for the
  generalized {R}osenau–{K}d{V} equation in both 1{D} and 2{D}.
\newblock \emph{J. Comput. Appl. Math.}, 355:\penalty0 310--331, 2019.

\bibitem[Wongsaijai and Poochinapan(2014)]{Wongsaijai2014}
B.~Wongsaijai and K.~Poochinapan.
\newblock A three-level average implicit finite difference scheme to solve
  equation obtained by coupling the {R}osenau-{K}d{V} equation and the
  {R}osenau-{RLW} equation.
\newblock \emph{Appl. Math. Comput.}, 245:\penalty0 289--304, 2014.

\bibitem[Zuo(2009)]{Zuo2009}
J-M. Zuo.
\newblock Solitons and periodic solutions for the {R}osenau–{K}d{V} and
  {R}osenau–{K}awahara equations.
\newblock \emph{Appl. Math. Comput.}, 215:\penalty0 835--840, 2009.

\end{thebibliography}

\end{document}